\documentclass[12pt,reqno]{amsart}
\usepackage{hyperref}
\usepackage{amsmath,amssymb, amsthm}
\usepackage{caption}
\usepackage{amssymb,amsmath,euscript,enumerate,tikz}
\usepackage[margin=1in]{geometry}
\usepackage{graphicx}
\usepackage{float}
\usepackage{pgf,tikz}
\usepackage{mathrsfs}
\usepackage{mathtools}

\newcommand \ass{\operatorname{Ass}}

\newcommand \R{\mathbb{R}}

\newcommand \K{\mathbb{K}}

\newcommand{\y}{\mathbf{y}}

\DeclareMathOperator{\sfBorel}{sfBorel}
\DeclareMathOperator{\mon}{Mon}
\DeclareMathOperator{\lex}{lex}
\DeclareMathOperator{\sflex}{sfLex}

\theoremstyle{plain}
\newtheorem{theorem}{Theorem}[section]
\newtheorem{lemma}[theorem]{Lemma}

\newtheorem{corollary}[theorem]{Corollary}

\theoremstyle{definition}

\newtheorem{remark}[theorem]{Remark}
\newtheorem{construction}[theorem]{Construction}
\newtheorem{definition}[theorem]{Definition}
\newtheorem{example}[theorem]{Example}

\begin{document}

\title[The Waldschmidt Constant of Square-free Principal Borel Ideals]{The Waldschmidt Constant of Square-free Principal Borel Ideals}

 \email{ajay.kumar@iitjammu.ac.in}
 \email{rajiv.kumar@iitjammu.ac.in}
 \email{2022rma2004@iitjammu.ac.in, paramjam2018@gmail.com}
 \author[Ajay Kumar]{Ajay Kumar}
  \author[Rajiv Kumar]{Rajiv Kumar}
 \author[Paramhans Kushwaha]{Paramhans Kushwaha}
\address{Indian Institute of Technology, Jammu}
\address{NH-44, PO Nagrota, Jagti, Jammu and Kashmir 181221}
\curraddr{}
\email{}
\thanks{}

\subjclass[2020]{Primary: 13F20, 13A02, 13F55, 05E40}

\date{}

\dedicatory{}
\keywords{square-free principal Borel, lex-segment, Waldschmidt constant}
\begin{abstract}
	Fix a square-free monomial $m = x_{i_1} \cdots x_{i_s}$ in $S = \K[x_1, \dots, x_n]$, where $\K$ is a field. The ideal generated by all square-free monomials that are Borel moves of $m$ is referred to as the \emph{square-free principal Borel ideal}, denoted by $\mathrm{sfBorel}(m)$. While the Waldschmidt constant of $\mathrm{sfBorel}(m)$ has been partially studied in the literature, we consider the remaining cases and provide bounds for the Waldschmidt constant of $\mathrm{sfBorel}(m)$. In some cases, we explicitly compute its exact value. Furthermore, we study this invariant for square-free Borel and square-free lex-segment ideals, establishing both bounds and exact values for special cases.
	\end{abstract}
	\maketitle
\section{Introduction}
  The Waldschmidt constant was introduced in $1970$'s  by Waldschmidt \cite{Waldschmidt-introduced} for the study of ideals of finite point sets in the context of complex analysis. This invariant has become an important tool in understanding the asymptotic properties of homogeneous ideals. More precisely, given a homogeneous ideal $I$ in $ S=\K[x_1,\ldots,x_n]$, denote $\alpha(I)$ the smallest degree of a generator of $I$. Then it is known that the limit $\lim_{m\rightarrow{\infty}}\frac{\alpha(I^{(m)})}{m}$ exists, where $I^{(m)}$ denotes the $m$-th symbolic power of $I$. This limit is referred to as the \emph{Waldschmidt constant} of $I$, denoted by $\widehat{\alpha}(I)$. The Waldschmidt constant provides insight into the containment problem: it can be used to compare ordinary powers with the symbolic powers of an ideal. This invariant has appeared in different areas of mathematics, e.g., in number theory (\cite{Singular-points-on-complex-hypersurfaces-and-multidimensional-Schwartz-lemma-Chudnovsky, Numbers-transcendants-..-Waldschmidt}), complex analysis \cite{Estimation-L2-pour-..-Skoda}, algebraic geometry (\cite{Comparing-powers-and-symbolic-powers-of-ideals-B_H, The-resurgence-of-ideals-of-points-and-the-containment-problem-Bocci-Harbourne, A-canonical-linear-system-associated-to-adjoint-divisors-in-characteristic-p>0-Schwede}), and commutative algebra \cite{Are-symbolic-powers-highly-evolved-Harbourne-Huneke}. For any homogeneous ideal $I$, Bocci and Harbourne established various bounds for $\widehat{\alpha}(I)$ (see \cite{Comparing-powers-and-symbolic-powers-of-ideals-B_H}), sparking renewed interest in its computation. For recent developments on the Waldschmidt constant, we refer readers to (see \cite{Steiner-systems-and-configrations-of-points-BFGM, Chudnovsky-conjecture-and-the-stable-Harbourne-Huneke-containment-BGHN, BCGHJNSTV2016-The-Waldschmidt-constant-for-sqfree-monomial-ideals, Powers-of-principal-Q-Borel-ideals-CKCSV, IdealsofPowers-and-Powersofideals-,The-Waldschmidt-Constant-of-Special-K-configrations-in-$P^n$-CGS, Borel-Generators-FMJ-2011}), and references therein. 

    The goal of this paper is to investigate the Waldschmidt constant of the square-free Borel and square-free lex-segment ideals (see Definition \ref{def: Borel ideals} and \ref{def: square-free lex ideal}). Given a monomial $m$ in $S$, a \emph{Borel move} of $m$ is a monomial $m\cdot\frac{x_{i_1}}{x_{j_1}}\cdots \frac{x_{i_s}}{x_{j_s}}$, where $i_t<j_t$, and $x_{j_t}$ divides $m$ for all $t$. 
    A monomial ideal $I$ is a (\emph{square-free}) \emph{principal Borel ideal} if there exists a (\emph{square-free}) monomial $m$ such that $I$ is generated by all the (\emph{square-free}) Borel moves of $m$. 
    For more on the topic, we point the readers to (\cite{BCGHJNSTV2016-The-Waldschmidt-constant-for-sqfree-monomial-ideals,  Waldschmidt-constants-for-Stanley-Reisner-ideals-of-a-class-of-simplicial-complexes, CKSV2022-On-the-Waldschmidt-constant-of-sqfree-principal-Borel-ideals, Symbolic-powers-of-monomial-ideals-Cooper-Embree-Robert-Ha-Hoefel, Borel-Generators-FMJ-2011}) and references therein.

   In this work, we focus on square-free Borel ideals and square-free lex-segment ideals. As shown in \cite{BCGHJNSTV2016-The-Waldschmidt-constant-for-sqfree-monomial-ideals}, 
the primary decomposition of a square-free monomial ideal $I$ can be used 
to construct an optimization problem. The optimal solution to this problem provides the Waldschmidt constant of $I$. Solving this optimization problem can be computationally challenging. For example, if $m = x_2 x_{n-1} x_n$, $n\geq 5$, then finding $\widehat{\alpha}(\mathrm{sfBorel}(m))$ requires solving a linear optimization problem with $n$ variables and $2n-2$ constraints. Although the degree of $m$ is only $3$, the complexity of this problem grows significantly with $n$. Therefore, it is natural to ask what information can be obtained about the Waldschmidt constant in such cases.

Let $m=x_{i_1}\cdots x_{i_s}$ be a square-free monomial. Authors in \cite{CKSV2022-On-the-Waldschmidt-constant-of-sqfree-principal-Borel-ideals} have obtained an upper bound and a recursive method for a lower bound for $\widehat{\alpha}(\sfBorel(m))$. However, there is no concise lower bound in the literature. Suppose $t$ is the smallest number such that $i_{t}+2\leq i_{t+1}$. Then, the authors have computed the Waldschmidt constant $\widehat{\alpha}(\sfBorel(m))$ provided $i_t\geq s$. Motivated by their work, we consider the case when $i_t< s$ and provide bounds for the Waldschmidt constant. In some cases, we compute the exact values of the Waldschmidt constant $\widehat{\alpha}(\sfBorel(m))$. These bounds are expressed in terms of $s=\deg(m)$ and $i_j,1\leq j\leq s$. The main tool in the proofs is the associated linear optimization problem and the description of associated primes of $\sfBorel(m)$ given in \cite{BCGHJNSTV2016-The-Waldschmidt-constant-for-sqfree-monomial-ideals}. In fact, we prove the following:\vspace{2mm}\newline 
\textbf{Theorem \ref{thm: lower-bound in general}}
Let $m=x_{i_1}x_{i_2}\cdots x_{i_s}$ be a square-free monomial with $T'(m)$, and $l$ be largest such that $i_{t_{k_1}}+1\leq t_l$, and suppose $I=\sfBorel(m)$. Then  
        $$\widehat{\alpha}(I)\geq 1+\frac{s-{{i_{t_{k_1}}-1}}}{i_{t_l}-t_l+1}+\frac{i_{t_k}}{i_{t_{k}}-t_k+1}+\sum_{j=1}^{r-1}\frac{i_{t_{k_j}}-i_{t_{k_{j+1}}}}{i_{t_{k_j}}-{t_{k_{j}}}+1}.$$  
        \vspace{2mm}\newline 
 \textbf{Theorem \ref{thm: Upper bound on Waldschmidt constant with conditions}}       
 Let $m=x_{i_1}x_{i_2}\cdots x_{i_s}$ be a square-free monomial with $2\leq i_1\leq s-1$,
   and $I=\sfBorel(m)$. Suppose $i_j-i_1\geq (j-1)(i_2-i_1)$, for all $3\leq j\leq i_1+1$. Then $$\widehat{\alpha}(I)\leq 2+\frac{s-i_1-1}{i_1(i_2-i_1)}.$$ In addition, if $i_{i_1+1}\geq s-1$ and $i_1(i_2-i_1)=i_{i_1+1}-i_1$, then the equality holds.\vspace{3mm}
   
  Example \ref{exm: Lower bound is attained for infinitely} presents an infinite family of square-free Borel ideals for which the bound in Theorem \ref{thm: lower-bound in general} is attained, while the bound given in \cite[Theorem 4.4]{CKSV2022-On-the-Waldschmidt-constant-of-sqfree-principal-Borel-ideals} is strict. In fact, the following corollary produces infinitely many classes of square-free Borel ideals for which the lower bound in Theorem \ref{thm: lower-bound in general} is achieved.
\vspace{1mm}\newline 
\textbf{Corollary \ref{cor: Waldschmidt constant}}
Let $m=x_{i_1}\cdots x_{i_s}$ with $T(m)$, $IT(m)$, $T'(m)$, and $l$ be the largest such that $i_{t_{k_1}+1}\leq t_l$, and let $I=\sfBorel(m)$. If $T'(m)$ is a truncation of $T(m)$ and there is no $t_p\in T(m)$ such that ${t_{k_1}}<t_p<t_l$, then  $$\widehat{\alpha}(I)= 1+\frac{s-i_{t_{k_1}}-1}{i_{t_l}-t_l+1}+\frac{i_{t_k}}{i_{t_{k}}-t_k+1}+\sum_{j=1}^{r-1}\frac{i_{t_{k_j}}-i_{t_{k_{j+1}}}}{i_{t_{k_j}}-{t_{k_{j}}}+1}.$$ 
Further, we investigate the Waldschmidt constant of square-free Borel ideals and square-free lex-segment ideals. We provide upper and lower bounds for the Waldschmidt constant of these ideals. These bounds are in terms of the Waldschmidt constant of some principal square-free Borel ideals. In some cases, we obtain the exact values. In fact:\vspace{3mm}\newline 
\textbf{Theorem \ref{thm: the Waldschmidt constant of lex ideals}}
 Let $m=x_{i_1}\cdots x_{i_s}$ be a square-free monomial in $S$. Then 
     \begin{enumerate}
         \item $\widehat{\alpha}(\sflex(m))\leq 1+\frac{s-1}{i_1}$, and 
         \item $\widehat{\alpha}(\sflex(m))\geq 
         \begin{cases}
             1+\frac{s-1}{i_1}, \qquad\text{ if } i_{1}\geq s\\
             2+\frac{s-i_1-1}{n-s+1},\quad\text{ if } i_{1}\leq s-1.
         \end{cases}$
     \end{enumerate}
     In particular, if $i_1\geq s-1$, then $\widehat{\alpha}(\sflex(m))=1+\frac{s-1}{i_1}.$
     \vspace{3mm}
        
    Our article is organised as follows: Section \ref{section: preliminaries} reviews the essential background on square-free Borel ideals and the Waldschmidt constant. In Section \ref{section: lower bound}, we establish a lower bound for the Waldschmidt constant. Section \ref{section: upper bound} is devoted to studying an upper bound for the Waldschmidt constant. In Section \ref{section: the waldschmidt constant}, exact values of 
$\widehat{\alpha}(\text{sfBorel}(m))$ are determined under specific conditions. In Section \ref{sec: square-free Borel and lex ideals}, bounds on the Waldschmidt constant for square-free Borel and square-free lex-segment ideals with equality under certain conditions are obtained.
   \section{Preliminaries}\label{section: preliminaries}
    Throughout this article, let $S = \K[x_1, \dots, x_n]$ be a polynomial ring over a field $\K$. This section reviews the essential notation for square-free monomial ideals, square-free Borel ideals, and the Waldschmidt constant. We specifically emphasize the definition of square-free Borel ideals, as they constitute the primary focus of this study.

    \begin{definition}\label{def: Borel ideals}
        Let $T=\{m_1,\dots,m_r\}$ be a set of square-free monomials in $S$. Define $\sfBorel(T):=\sfBorel(m_1,\ldots, m_r)$, the \emph{square-free Borel ideal generated by $T$}, to be the ideal generated by all square-free monomials that can be obtained from the Borel moves of elements of $T$. 
        If $T=\{m\}$, then $\sfBorel(T)$ is the \emph{square-free principal Borel ideal} generated by $m$.
    \end{definition}

    We recall two tuples from \cite[Definition 2.2]{CKSV2022-On-the-Waldschmidt-constant-of-sqfree-principal-Borel-ideals} that are essential in this article.
    \begin{definition}
       Let $m=x_{i_1}\cdots x_{i_s}$ be a square-free monomial in $S$. The tuple $T(m)=(t_0,\ldots,t_k)$ is inductively defined as $t_0=s$ and suppose $t_{i-1}$ is already defined, then $t_i=\max\{j<t_{i-1}~|~ i_j<i_{j+1}-1\}$ and $IT(m)=(i_{t_0},\ldots,i_{t_k})$.
    \end{definition}  
    
    The sequence $T(m)$ tracks the positions where the indices are increased by more than one. Note that $T(m)$ and $IT(m)$ are decreasing while the indices of $m$ are increasing. This sometimes creates confusion while tracing back elements of $IT(m)$ in $m$. Therefore, for simplicity, we use these tuples in reverse order and write $T(m)=(t_k,\ldots,t_0)$ and $IT(m)=(i_{t_k},\ldots,i_{t_0})$. Observe that both $T(m)$ and $(i_{t_k}-t_k,i_{t_{k-1}}-t_{k-1},\ldots,i_{t_0}-t_0)$ are strictly increasing sequences. For the proof, we refer to \cite[Lemma 2.4]{CKSV2022-On-the-Waldschmidt-constant-of-sqfree-principal-Borel-ideals}. 

    Given an ideal $I$ in $S$, let $\ass(I)$ denote the \emph{set of associated primes} of the ideal $I$.  When $I$ is a radical ideal, it is known that all the associated primes of $I$ are minimal. In particular, if $I$ is a square-free monomial ideal, then every minimal prime of $I$ is generated by a set of variables. The following result gives the associated primes of $\sfBorel(m)$ that are described in \cite[Theorem 3.17]{Borel-Generators-FMJ-2011} as follows.
    \begin{theorem}\label{thm: description of primes}
        Let $m=x_{i_1}\cdots x_{i_s}$ be a square-free monomial in the polynomial ring $S$ with $T(m)=(t_k,\ldots,t_0),$ $IT(m)=(i_{t_k},\ldots,i_{t_0})$, and $I=\sfBorel(m)$. Then $\langle x_{j_1},\ldots,x_{j_l}\rangle\in \ass(I)$ if and only if $x_{j_1}x_{j_2}\cdots x_{j_l}$ is a minimal generator of the square-free Borel ideal $$\sfBorel(\{x_j\cdots x_{i_j}:1\leq j\leq s\})=\sfBorel(\{x_{t_j}x_{t_j+1}\cdots x_{i_{t_j}}:0\leq j\leq k\}).$$
    \end{theorem}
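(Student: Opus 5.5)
The plan is to use the standard dictionary between minimal primes of a square-free monomial ideal and minimal vertex covers. For a set $C\subseteq[n]$, the prime $P_C=\langle x_c : c\in C\rangle$ contains $I$ if and only if $C$ meets the support of every square-free Borel move of $m$. Since $I$ is radical, $\ass(I)$ consists exactly of the $P_C$ with $C$ minimal with this property. So the statement reduces to a combinatorial description of the covers $C$, followed by an identification of the minimal ones with the minimal generators of the stated ideal.

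\textbf{Step 1 (membership criterion).} A square-free monomial $x_{a_1}\cdots x_{a_s}$ with $a_1<\cdots<a_s$ is a Borel move of $m$ if and only if $a_j\le i_j$ for all $j$. This holds by sorting: a Borel move only decreases indices, and conversely each $x_{i_j}$ can be moved to $x_{a_j}$ directly.

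\textbf{Step 2 (cover criterion).} I will show that $C$ is a cover if and only if there is some $j\in\{1,\dots,s\}$ with $|C\cap[i_j]|\ge i_j-j+1$. Equivalently, $C$ fails to be a cover if and only if $|[i_j]\setminus C|\ge j$ for every $j$.
\begin{itemize}
\item Necessity: if some Borel move $x_{a_1}\cdots x_{a_s}$ avoids $C$, then $a_1,\dots,a_j$ are $j$ distinct elements of $[i_j]\setminus C$, so $|[i_j]\setminus C|\ge j$ for every $j$.
\item Sufficiency: build such a move greedily. Let $a_1$ be the smallest element of $[n]\setminus C$, and let $a_j$ be the smallest element of $[n]\setminus C$ exceeding $a_{j-1}$. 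The hypothesis $|[i_j]\setminus C|\ge j$ guarantees $a_j\le i_j$ by induction.
\end{itemize}
This Hall-type greedy argument is the only genuinely combinatorial point, and I expect it to be the main (though mild) obstacle. Care is needed to check that the induction really forces $a_j\le i_j$.

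\textbf{Step 3 (identify the covering ideal).} Next I will show that $\sfBorel(x_j x_{j+1}\cdots x_{i_j})$ is generated by all square-free monomials of degree $i_j-j+1$ supported in $[i_j]$.
\begin{itemize}
\item Any $d_1<\cdots<d_{i_j-j+1}$ inside $[i_j]$ satisfies $d_r\le i_j-(i_j-j+1-r)=j-1+r$. So by Step 1, applied to the monomial $x_j\cdots x_{i_j}$, it is a Borel move.
\item Conversely, every Borel move of $x_j\cdots x_{i_j}$ is of this form.
\end{itemize}
Hence a square-free monomial $x_C$ lies in $J=\sfBorel(\{x_j\cdots x_{i_j}:1\le j\le s\})$ exactly when $|C\cap[i_j]|\ge i_j-j+1$ for some $j$, that is, exactly when $C$ is a cover by Step 2. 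Minimal covers then correspond to minimal square-free generators of $J$. Since $J$ is generated by square-free monomials, its minimal generators are precisely the $x_C$ with $C$ a minimal cover, which gives the first description of $\ass(I)$.

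\textbf{Step 4 (reduce to $T(m)$).} Finally I will show that the generators indexed by $j\notin T(m)$ are redundant. If $j<s$ and $j\notin T(m)$, then $i_{j+1}=i_j+1$. The monomial $x_{j+1}\cdots x_{i_j+1}$ has the same degree as $x_j\cdots x_{i_j}$, and replacing $x_{i_j+1}$ by $x_j$ is a Borel move. Thus $x_j\cdots x_{i_j}\in\sfBorel(x_{j+1}\cdots x_{i_{j+1}})$. Iterating, each $j$ is absorbed by the next index of $T(m)$ above it, and $t_0=s$ terminates the chain. This gives equality of the two Borel ideals.
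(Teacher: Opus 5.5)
Your proof is correct, and it takes a different route from the paper. The paper does not prove this statement. It imports it from \cite[Theorem 3.17]{Borel-Generators-FMJ-2011}, where it is obtained within Francisco--Mermin--Schweig's general framework of Borel generators. It also asserts the equality of the two Borel ideals in the statement without argument.

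You instead compute the minimal vertex covers of the generators of $\sfBorel(m)$ (equivalently, its Alexander dual) directly, in three steps:
\begin{enumerate}
\item Step 1 gives the Gale-order criterion for Borel moves.
\item Step 2 uses a greedy, Hall-type matching to show that $C$ is a cover if and only if $|C\cap[i_j]|\ge i_j-j+1$ for some $j$.
\item Step 3 identifies $\sfBorel(x_j\cdots x_{i_j})$ with the ideal of all square-free monomials of degree $i_j-j+1$ in $x_1,\dots,x_{i_j}$. This is exactly the counting fact the paper later invokes in the proof of Theorem~\ref{thm: lower-bound in general}.
\end{enumerate}

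Your route gives a short, self-contained argument. It also justifies the reduction to $T(m)$ (your Step 4, via $T(m)=\{s\}\cup\{j<s : i_{j+1}\ge i_j+2\}$), which the paper leaves unproved. The cited route buys generality, since the Borel-generator framework handles non-principal and non-square-free Borel ideals uniformly.

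One small point of care in Step 1. Suppose square-free Borel moves are read as iterated single moves that must stay square-free, as in the closure definition of $\sfBorel$. Then perform the replacements $x_{i_j}\mapsto x_{a_j}$ in increasing order of $j$. Each intermediate monomial $x_{a_1}\cdots x_{a_j}x_{i_{j+1}}\cdots x_{i_s}$ is then square-free, because $a_1<\cdots<a_j\le i_j<i_{j+1}<\cdots<i_s$. Under the paper's simultaneous-move definition this care is unnecessary.
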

    We say that a prime ideal $Q$ is \emph{associated to} a monomial $m=x_{j_1}\cdots x_{j_k}$ if $Q=\langle x_{j_1}, \ldots, x_{j_k}\rangle$. 
    Since for each $0 \leq j \leq k$, the monomials $x_{t_j}\cdots x_{i_{t_j}}$ have different degrees, they are irredundant as generators.
    Therefore, for each $0\leq j\leq k$, the prime ideal associated to  the monomial $ x_{t_j}\cdots x_{i_{t_j}}$ belongs to $\ass(I)$ .  
    \begin{example}
        Let $m=x_2x_3x_6x_8x_9x_{13}\in \K[x_1,\ldots,x_{15}]$. Then $T(m)=(2,3,5,6)$ and $IT(m)=(3,6,9,13)$. Hence the associated primes of $\sfBorel(m)$ are in one-to-one correspondence with the minimal monomial generators of $$\sfBorel(x_2x_3,x_3x_4x_5x_6,x_5x_6x_7x_8 x_9,x_6x_7x_8 x_9x_{10}x_{11}x_{12}x_{13}).$$
    \end{example}
    \subsection{The Waldschmidt constant.}
    We recall the definition of the Waldschmidt constant.

    Given a square-free monomial ideal $I\subseteq S$, let $I=P_1\cap P_2\cap\cdots \cap P_t$ be the minimal primary decomposition of $I$. The \emph{$m$-th symbolic power} of $I$ , denoted $I^{(m)}$, is the ideal $$I^{(m)}=P_1^m\cap P_2^m\cap \cdots\cap P_t^m.$$ For the more general definition of a symbolic power of an ideal, see \cite{IdealsofPowers-and-Powersofideals-}. 

    For any homogeneous ideal $L\subseteq S$, let $\alpha(L)$ denote the smallest degree of a generator of $L$. The \emph{Waldschmidt constant} of a square-free monomial ideal $L$ is defined as $$\widehat{\alpha}(L)=\lim _{m\rightarrow{\infty}}\frac{\alpha(L^{(m)})}{m}.$$ 
    The following result is a key tool in the study of the Waldschmidt constant of a square-free monomial ideal, which relates this invariant to a linear programming problem. 
    \begin{theorem}\cite[Theorem 3.2]{BCGHJNSTV2016-The-Waldschmidt-constant-for-sqfree-monomial-ideals}\label{thm: Waldschmidt constant is optimal solution of an LPP}
        Let $I\subseteq S$ be a square-free monomial ideal with the primary decomposition $I=P_1\cap P_2\cap\cdots \cap P_t$. Define the $t\times n$ matrix $A$ where $$A_{i,j}=\begin{cases}
            1 &\text{ if $x_j\in P_i$}\\0&\text{ if $x_j\notin P_i$. }
        \end{cases}$$
        Then $\widehat{{\alpha}}(I)$ is the optimal value of the linear program $$\min\{\mathbf{1}^T\mathbf y~|~A\mathbf y\geq \mathbf1,\mathbf{y\geq0}\}.$$
    \end{theorem}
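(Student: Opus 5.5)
The plan is to turn the membership condition for symbolic powers into a system of linear inequalities on exponent vectors. Then I compare the resulting integer program, scaled by $1/m$, with its LP relaxation, using the fact that an LP with rational data has a rational optimal vertex.

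\textbf{Step 1: membership in $I^{(m)}$.} Since $I^{(m)}=P_1^m\cap\cdots\cap P_t^m$ is an intersection of monomial ideals, it is a monomial ideal. Each $P_i$ is generated by the variables $x_j$ with $A_{i,j}=1$. A monomial $x^{\mathbf a}=x_1^{a_1}\cdots x_n^{a_n}$ lies in $P_i^m$ if and only if $\sum_{j:\,x_j\in P_i} a_j\geq m$, i.e.\ $(A\mathbf a)_i\geq m$. Hence $x^{\mathbf a}\in I^{(m)}$ if and only if $A\mathbf a\geq m\mathbf 1$. Consequently
$$\alpha(I^{(m)})=\min\{\mathbf 1^T\mathbf a \mid A\mathbf a\geq m\mathbf 1,\ \mathbf a\in\mathbb Z_{\geq 0}^n\}.$$

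\textbf{Step 2: lower bound.} Let $v$ denote the optimal value of the LP $\min\{\mathbf 1^T\mathbf y\mid A\mathbf y\geq \mathbf 1,\ \mathbf y\geq \mathbf 0\}$. The LP is feasible, e.g.\ $\mathbf y=\mathbf 1$ works, and it is bounded below by $0$, so $v$ exists. If $\mathbf a$ attains $\alpha(I^{(m)})$, then $\mathbf y=\mathbf a/m$ is feasible for the LP. Therefore $\alpha(I^{(m)})/m\geq v$ for every $m\geq 1$.

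\textbf{Step 3: upper bound along a subsequence.} The constraint matrix $A$ and the vectors involved have integer entries. By the fundamental theorem of linear programming, the optimum is attained at a vertex $\mathbf y^*$ of the feasible polyhedron. A vertex solves a nonsingular integer linear system, so by Cramer's rule $\mathbf y^*\in\mathbb Q^n$. Let $d$ be a common denominator of its entries. For $m=dq$ the vector $\mathbf a=m\mathbf y^*$ is a nonnegative integer vector with $A\mathbf a\geq m\mathbf 1$. This gives $\alpha(I^{(dq)})\leq dq\,v$, and combined with Step 2, $\alpha(I^{(dq)})/(dq)=v$ for all $q\geq 1$.

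\textbf{Step 4: the limit.} The paper takes the existence of the limit defining $\widehat\alpha$ as known. One can also check it directly: $I^{(a)}I^{(b)}\subseteq I^{(a+b)}$ by Step 1, since the constraints add. So $m\mapsto\alpha(I^{(m)})$ is subadditive, and Fekete's lemma gives that $\lim \alpha(I^{(m)})/m$ exists and equals $\inf_m \alpha(I^{(m)})/m$. Steps 2 and 3 show this infimum equals $v$: every term is at least $v$, and the terms along $m=dq$ equal $v$. Hence $\widehat\alpha(I)=v$.

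\textbf{Main obstacle.} The only nontrivial inputs are the rationality of an optimal LP solution and the monomial description of $P_i^m$. The first is standard polyhedral theory; the second follows because a monomial lies in a power of a monomial prime exactly when its total degree in that prime's variables is large enough. I expect the rationality step to need the most care in the write-up, namely ensuring an optimal vertex exists. This holds because the feasible region lies in the nonnegative orthant and so contains no lines.
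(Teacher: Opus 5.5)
Your proof is correct and follows essentially the same route as the standard argument. The paper itself gives no proof and only cites Bocci et al., whose proof likewise converts membership of $x^{\mathbf a}$ in $I^{(m)}$ into $A\mathbf a\geq m\mathbf 1$, rescales a minimal-degree exponent vector by $1/m$ for the lower bound, and scales a rational optimal solution by a common denominator to get the matching upper bound along the subsequence $m=dq$. Your subadditivity/Fekete justification for the existence of the limit is a harmless addition, since the paper takes that existence as known.
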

     Note that rows of $A$ are indexed by associated primes of $I$ and columns by the variables in $S$. The matrix $A$ in Theorem \ref{thm: Waldschmidt constant is optimal solution of an LPP} is called the \emph{matrix of associated primes} of $I$.
     \begin{example}
         Let $m=x_{2}x_{n-1} x_{n}$, $n\geq 5$. Then $T(m)=(1,3)$ and $IT(m)=(2,n)$. The associated primes of $\sfBorel(m)$ are in one-to-one correspondence with the minimal monomial generators of $\sfBorel(x_1x_2,x_3\cdots x_{n})$. So the matrix $A$ of associated primes of $\sfBorel(m)$ is of size $(2n-2)\times n$.  
     \end{example}
Let $B$ be the submatrix of $A$ whose $j$-th row is corresponding to the associated prime $P_j=\langle x_{t_j},\ldots, x_{i_{t_j}}\rangle$ for each $0\leq j\leq k$. Authors in \cite[Lemma 3.3]{CKSV2022-On-the-Waldschmidt-constant-of-sqfree-principal-Borel-ideals} showed that we can bound the optimal solution of Theorem \ref{thm: Waldschmidt constant is optimal solution of an LPP} by considering only the submatrix $B$. Let $l$ be the smallest integer such that $i_{t_{l+1}}<s\leq i_{t_l}$, and $\mathbf x\in \R^n$. The condition of $\mathbf x_j$ being non-increasing up to the first $i_{t_l}$ terms in \cite[Lemma 3.3]{CKSV2022-On-the-Waldschmidt-constant-of-sqfree-principal-Borel-ideals} can be relaxed up to the degree $t_0=s$. The proofs follow identically to the original arguments and are thus omitted for brevity, and we have the following:
     \begin{lemma}\cite[Lemma 3.3]{CKSV2022-On-the-Waldschmidt-constant-of-sqfree-principal-Borel-ideals}\label{lemma: solving submatrix is enough}
         Let $m=x_{i_1}\cdots x_{i_s}$ be a square-free monomial with $T(m)=(t_k,\ldots,t_0)$, $IT(m)=(i_{t_k},\ldots,i_{t_0})$, and $I=\sfBorel(m)$. Let $B$ be the submatrix of $A$ as above. Suppose $\mathbf x\in \R^n$ is such that  
         \begin{enumerate}
         \item $B\mathbf{x}\geq \mathbf 1$,
             \item $\mathbf x_j\geq \mathbf x_{j+1}$ for $1\leq j\leq s-1$,
             \item $\mathbf x_s\geq \mathbf x_j$ for all $s+1\leq j\leq i_s$.
         \end{enumerate}
         Then $A\mathbf x\geq \mathbf 1$. 
     \end{lemma}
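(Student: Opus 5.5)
Each row of $A$ corresponds to an associated prime, and $(A\mathbf x)_P=\sum_{x_a\in P}\mathbf x_a$. So it suffices to show the following. Let $w=x_{t_j}x_{t_j+1}\cdots x_{i_{t_j}}$ be one of the generators in Theorem \ref{thm: description of primes}, and let $u=x_{a_1}\cdots x_{a_d}$ be a square-free Borel move of $w$. Then
$$\sum_{r=1}^d \mathbf x_{a_r}\;\geq\; \sum_{b=t_j}^{i_{t_j}}\mathbf x_b .$$
The right-hand side is a row of $B\mathbf x$, hence $\geq 1$ by hypothesis (1). Minimality of the generator is irrelevant, since the inequality will hold for every Borel move. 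Write $b_r=t_j+r-1$, with $a_1<\cdots<a_d$ and $d=i_{t_j}-t_j+1$. Being a Borel move means $a_r\leq b_r$ for all $r$. Equivalently, for every threshold $q$,
$$\#\{r: a_r\leq q\}\geq \#\{r: b_r\leq q\}.$$
All indices involved are $\leq i_{t_j}\leq i_s$, and $t_j\leq t_0=s$.

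\textbf{Step 1 (cancel the large common indices).} Since $w$ contains every variable with index in $(s,i_{t_j}]$, every index $a_r>s$ already occurs among the $b$'s. I cancel these common indices from both sides. What remains is a multiset $A'$ of $u$-indices, all $\leq s$, and a set $W'$ of $w$-indices of the same cardinality. $W'$ consists of all $w$-indices $\leq s$ together with the $w$-indices $>s$ that do not appear in $u$.

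\textbf{Step 2 (match the remaining indices).} Sort $A'=\{a'_1<\cdots<a'_e\}$ and $W'=\{b'_1<\cdots<b'_e\}$, so the $w$-indices $\leq s$ come first in $W'$. For each $r$ with $b'_r\leq s$, I claim $a'_r\leq b'_r$. For any $q\leq s$, the counts $\#\{a'\leq q\}$ and $\#\{b'\leq q\}$ equal the original counts $\#\{a_r\leq q\}$ and $\#\{b_r\leq q\}$, because only indices $>s$ were removed. The Borel domination count inequality therefore persists at all thresholds $q\leq s$, which gives $a'_r\leq b'_r$ whenever $b'_r\leq s$.

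\textbf{Step 3 (conclude).} For those $r$ with $b'_r\leq s$, both indices lie in $[1,s]$, so hypothesis (2) gives $\mathbf x_{a'_r}\geq \mathbf x_{b'_r}$. For those $r$ with $b'_r>s$, we have $a'_r\leq s$. Then (2) gives $\mathbf x_{a'_r}\geq \mathbf x_s$, and since $s<b'_r\leq i_s$, hypothesis (3) gives $\mathbf x_s\geq \mathbf x_{b'_r}$. Summing these inequalities and adding back the cancelled common terms yields the desired inequality, and hence $A\mathbf x\geq\mathbf 1$.

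The main obstacle is Step 1/2. Naively pairing $a_r$ with $b_r$ fails when $s<a_r<b_r$, since no monotonicity of $\mathbf x$ is assumed beyond index $s$. The cancellation trick, together with the fact that the threshold counts for $q\leq s$ are unaffected by it, is what circumvents this.
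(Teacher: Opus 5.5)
Your proof is correct. The paper itself gives no argument for this lemma. It cites \cite[Lemma 3.3]{CKSV2022-On-the-Waldschmidt-constant-of-sqfree-principal-Borel-ideals}, where $\mathbf{x}$ was assumed non-increasing on the longer range up to $i_{t_l}$, and asserts that the proof goes through ``identically'' when monotonicity is required only up to $s$ together with condition (3).

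Your route uses the same underlying idea. Every associated prime comes from a square-free Borel move $u$ of some $w=x_{t_j}\cdots x_{i_{t_j}}$, and a domination argument then gives $A_P\mathbf{x}\geq B_j\mathbf{x}\geq 1$. What you add is the step that this ``identically'' glosses over. Under hypotheses (2)--(3) alone, the term-by-term comparison $\mathbf{x}_{a_r}\geq\mathbf{x}_{b_r}$ is not available when $s<a_r<b_r$, since nothing orders the entries of $\mathbf{x}$ beyond index $s$. Your cancellation resolves this. Because $t_j\leq s$ and $w$ is supported on the whole interval $[t_j,i_{t_j}]$, every index of $u$ exceeding $s$ already occurs in $w$. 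So these indices can be removed from both sides without changing any threshold count at $q\leq s$, and this is exactly what makes the weakened hypotheses sufficient. In that sense your proposal is a complete justification of the relaxation the paper claims.

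One small remark: you only need the implication ``$u$ is a Borel move of $w$ $\Rightarrow$ $a_r\leq b_r$ for the sorted indices.'' This holds even if some intermediate monomial in the sequence of moves is not square-free, because lowering one entry of a multiset weakly lowers every order statistic.
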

   
\section{Lower Bound}\label{section: lower bound}
Let $m=x_{i_1}\cdots x_{i_s}$ be a square-free monomial of degree $s$ in the polynomial ring $S=\K[x_1,\ldots,x_n]$ with $T(m)=(t_k,\ldots,t_0)$ and $IT(m)=(i_{t_k},\ldots,i_{t_0})$, and $I=\sfBorel(m)$. If $i_{t_{k}}\geq s$, then the Waldschmidt constant $\widehat{\alpha}(I)$ of $I$ is computed in \cite[Theorem 4.1]{CKSV2022-On-the-Waldschmidt-constant-of-sqfree-principal-Borel-ideals}. Henceforth, we assume that $i_{t_k} < s$ throughout this article, unless stated otherwise. Now, we construct a subsequence of $T(m)$ that is useful throughout the article.
  \begin{construction}\label{cons: definition of T'(m)}
       Let $m=x_{i_1}\cdots x_{i_s}$ be a square-free monomial of degree $s$ in $S$ with $T(m)=(t_k,\ldots,t_0)$ and $IT(m)=(i_{t_k},\ldots,i_{t_0})$.
  Now, we inductively construct a subsequence  $T'(m)=(t_1',t_2',\ldots,t_r')$ of $T(m)$ with $i_{t_j'}\leq s-1$ for each $1\leq j\leq r$ 
  as follows:
  
  Set $t_{1}'=t_k$. Now, suppose $t_{j}'$ is already constructed. Then $t_{j+1}'$ is the smallest in $T(m)$ such that $i_{t_{j}'}\leq s-1$ and $i_{{t_j'}}<t_{{j+1}}'$. Since we are following the notation in which the indices of $T(m)$ are decreasing, we use $T'(m)$ in the same order to avoid confusion. Therefore, we write $T'(m)=(t_{k_r},\ldots,t_{k_1})$, where $t_{k_j}=t_{r-j+1}'$ for each $j=r,r-1,\ldots,1$.

  \end{construction} 
  
\begin{remark}\label{remk: existence of l ...}
   With the notation as in Construction \ref{cons: definition of T'(m)}, observe that $i_{t_{k_1}}+1$ may not be a member of $T(m)$. Suppose $i_{t_{k_1}}+1\in T(m)$. Since $i_{t_{k_1}}+1\notin T'(m)$, it follows from the the construction of $T'(m)$ that $i_{i_{t_{k_1}}+1}\geq s$. Using above construction, note that $i_{t_{k_1}}\leq s-1$. If $i_{t_{k_1}}+1\notin T(m)$, then there exists $l$ such that $i_{t_{k_1}}+1<t_l$. Since $i_{t_{k_1}}<t_l$, by construction of $T'(m)$, we must have $i_{t_l}\geq s$. In conclusion, there exists $l$ such that $i_{t_{k_1}}+1\leq t_l$, and $i_{t_l}\geq s$. Also, if $l$ is largest such that $i_{t_{k_1}}+1\leq t_l$, then it follows from the construction of $T'(m)$ that $i_{t_l}\geq s$.

\end{remark}
  \begin{example}
      Let $m=x_{2}x_{4}x_{6}x_8x_{9}x_{10}x_{11}x_{12}x_{15}$. Then $T(m)=(1,2,3,8,9)$, $T'(m)=(1,3)$. In this case, $i_{t_{k_1}}+1\notin T(m)$ and $l=8$.
  \end{example}
The subsequence $T'(m)$ helps us to provide a lower bound for $\widehat{\alpha}(I)$, and we prove the following.
\begin{theorem}\label{thm: lower-bound in general}
    Let $m=x_{i_1}x_{i_2}\cdots x_{i_s}$ be a square-free monomial with $T'(m)$ as above, and $l$ be largest such that $i_{t_{k_1}}+1\leq t_l$, and suppose $I=\sfBorel(m)$. Then 
        $$\widehat{\alpha}(I)\geq 1+\frac{s-{{i_{t_{k_1}}-1}}}{i_{t_l}-t_l+1}+\frac{i_{t_k}}{i_{t_{k}}-t_k+1}+\sum_{j=1}^{r-1}\frac{i_{t_{k_j}}-i_{t_{k_{j+1}}}}{i_{t_{k_j}}-{t_{k_{j}}}+1}.$$  
\end{theorem}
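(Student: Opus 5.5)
By Theorem \ref{thm: Waldschmidt constant is optimal solution of an LPP}, $\widehat{\alpha}(I)$ is the optimal value of $\min\{\mathbf 1^T\mathbf y : A\mathbf y\geq \mathbf 1,\ \mathbf y\geq 0\}$. To get a lower bound I would pass to the dual, $\max\{\mathbf 1^T\mathbf w : A^T\mathbf w\leq \mathbf 1,\ \mathbf w\geq 0\}$. Weak duality then gives $\widehat{\alpha}(I)\geq \mathbf 1^T\mathbf w$ for any feasible $\mathbf w$. So the plan is to write down an explicit nonnegative weighting $\mathbf w$ on the associated primes, i.e.\ on the minimal generators of $\sfBorel(\{x_{t_j}\cdots x_{i_{t_j}}\})$ from Theorem \ref{thm: description of primes}. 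It should satisfy two conditions: each variable $x_i$ lies in primes of total weight at most $1$, and the total weight equals the stated right-hand side. The additive form of the bound indicates how to split $\mathbf w$ into blocks, one for each summand, with essentially disjoint variable supports.

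The blocks I would use are the following.
\begin{itemize}
\item For the term $\frac{i_{t_k}}{i_{t_k}-t_k+1}$: take all $\binom{i_{t_k}}{t_k}$ square-free subsets of size $t_k$ of $\{x_1,\dots,x_{i_{t_k}}\}$. These are Borel moves of $x_{t_k}\cdots x_{i_{t_k}}$, so they are associated primes. Give them equal weight so that each variable has load $1$, namely weight $1/\binom{i_{t_k}-1}{t_k-1}$. The total is then $\binom{i_{t_k}}{t_k}/\binom{i_{t_k}-1}{t_k-1}=i_{t_k}/t_k$, which is not what we want. Instead use the primes of size $i_{t_k}-t_k+1$, which are moves of $x_{t_k}\cdots x_{i_{t_k}}$ supported in $\{x_1,\dots,x_{i_{t_k}}\}$. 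Uniform weighting gives total $i_{t_k}/(i_{t_k}-t_k+1)$ with load exactly $1$ on $x_1,\dots,x_{i_{t_k}}$.
\item For the term indexed by $j$ in the sum: use primes of size $i_{t_{k_j}}-t_{k_j}+1$, that is, Borel moves of $x_{t_{k_j}}\cdots x_{i_{t_{k_j}}}$. Choose them to contain a fixed block of $t_{k_j}-1$ small variables? Loads on those would exceed $1$. The better choice is to take moves of the form $x_a x_{a+1}\cdots$ consisting of consecutive windows inside $[1,i_{t_{k_j}}]$, and scale all blocks so that every variable's total load stays at most $1$.
\item For the term $\frac{s-i_{t_{k_1}}-1}{i_{t_l}-t_l+1}$: use the generator $x_{t_l}\cdots x_{i_{t_l}}$ and its moves. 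The count $s-i_{t_{k_1}}-1$ comes from shifting windows over the variables $x_{i_{t_{k_1}}+1},\dots,x_{s-1}$, which Remark \ref{remk: existence of l ...} makes available because $i_{t_{k_1}}+1\leq t_l$ and $i_{t_l}\geq s$.
\item The leading $1$: a prime carried by $x_s$ and the top variables, together with the fact that $x_n$-side variables beyond $i_{t_l}$ are unused by the other blocks.
\end{itemize}

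The cleaner route, and the one I would actually pursue, is to verify the bound in primal form. Starting from an optimal $\mathbf y$, symmetrize: the constraint set is invariant under the Borel-type moves, and averaging lets me assume, as in Lemma \ref{lemma: solving submatrix is enough}, that $y_1\geq\cdots\geq y_s$ and that $y_s\geq y_j$ for $j>s$. Then I sum the constraints along a chain of primes. The constraint from the prime $\{x_{i_{t}-(\text{size})+1},\dots,x_{i_t}\}$ of size $i_t-t+1$ on the top variables, combined with monotonicity, gives $\sum_{a\le b\le i_t} y_b\geq (i_t-a+1)/(i_t-t+1)$ for suitable ranges. Telescoping over the blocks $[1,i_{t_{k_r}}]$, $(i_{t_{k_{j+1}}},i_{t_{k_j}}]$ and $(i_{t_{k_1}},s-1]$, plus $y_s+\dots\geq 1$ from a prime containing the last variables, yields the sum. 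Each block contributes its length divided by the size of the relevant prime, since an average of a non-increasing sequence over a tail is at most its average over the prime window.

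The main obstacle is the block $(i_{t_{k_1}},s-1]$ and the final $+1$. Here the relevant prime comes from $t_l$ rather than from a member of $T'(m)$, and its window $[t_l,i_{t_l}]$ overlaps variables at and beyond $s$. I must show that after using one prime to account for the "$1$" from $y_s,\dots$, the remaining mass on $x_{i_{t_{k_1}}+1},\dots,x_{s-1}$ is at least $(s-i_{t_{k_1}}-1)/(i_{t_l}-t_l+1)$. I would first try choosing the prime $\{x_{s-(i_{t_l}-t_l)},\dots,x_s\}$-type move and invoking the largest choice of $l$ to ensure no intermediate $t_p$ gives a smaller window.
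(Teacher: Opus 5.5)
Your primal route works and differs from the paper's proof, but as written it has two gaps. The paper writes down an explicit dual-feasible weighting of associated primes and concludes by weak duality: Borel moves of $x_{t_{k_j}}\cdots x_{i_{t_{k_j}}}$ supported on $x_{i_{t_{k_{j+1}}}+1},\ldots,x_{i_{t_{k_j}}}$ (possible since $t_{k_j}>i_{t_{k_{j+1}}}$), plus moves of $P_{t_l}$ and of $P_s$ avoiding $x_1,\ldots,x_{i_{t_{k_1}}}$, with column loads checked by binomial counting. Your dual-side bullets do not reach this, so everything rests on the primal argument.

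\textbf{First gap: the reduction to non-increasing $\mathbf y$.} As stated it is misjustified. The family of associated primes has no symmetry group to average over, and Lemma \ref{lemma: solving submatrix is enough} goes in a different direction: monotone plus $B\mathbf y\geq\mathbf 1$ implies feasibility. The correct mechanism is sorting by exchanges. If $a<b$ and $y_a<y_b$, swap the two entries.
\begin{itemize}
\item A prime containing $x_a$ but not $x_b$ only gains.
\item Let $P$ contain $x_b$ but not $x_a$. The new sum over $P$ equals the old sum over $P'=(P\setminus\{x_b\})\cup\{x_a\}$. By Theorem \ref{thm: description of primes}, $P$ corresponds to a Borel move $u$ of some $x_{t_j}\cdots x_{i_{t_j}}$, so $P'$ corresponds to $u x_a/x_b$, again such a move. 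Hence $P'$ contains an associated prime, and the sum is still at least $1$.
\end{itemize}
Repeated swaps make an optimal $\mathbf y$ non-increasing without changing $\mathbf 1^T\mathbf y$.

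\textbf{Second gap: the last block.} You stop exactly at the step you call the main obstacle, and your suggested fix (a move of $P_{t_l}$ ending at $x_s$, plus maximality of $l$) is not what is needed. Your displayed estimate only covers blocks ending exactly at some $i_t$. Use instead this comparison: for non-increasing $\mathbf y$, if $u\leq u'$ and $v\leq v'$, then the average of $\mathbf y$ over $[u,v]$ is at least its average over $[u',v']$. Your sentence about tails has this inequality the wrong way round.

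Take $[u,v]=[i_{t_{k_1}}+1,s-1]$ and $[u',v']=[t_l,i_{t_l}]$. This is legitimate since $i_{t_{k_1}}+1\leq t_l$ and $s-1<s\leq i_{t_l}$ by Remark \ref{remk: existence of l ...}. It gives $\sum_{b=i_{t_{k_1}}+1}^{s-1}y_b\geq \frac{s-i_{t_{k_1}}-1}{i_{t_l}-t_l+1}$. The leading $1$ is just the constraint $\sum_{b=s}^{i_s}y_b\geq 1$ of $P_{t_0}=\langle x_s,\ldots,x_{i_s}\rangle$ on the disjoint block $[s,i_s]$. The overlap of $[t_l,i_{t_l}]$ with $[s,i_s]$ is harmless, because that window only bounds an average and never allocates mass.

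Add the blocks $[1,i_{t_k}]$ (compared with $[t_k,i_{t_k}]$) and $[i_{t_{k_{j+1}}}+1,i_{t_{k_j}}]$ (compared with $[t_{k_j},i_{t_{k_j}}]$). Together these blocks partition $[1,i_s]$, and the bound follows.

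Once repaired, your argument is shorter than the paper's. It needs no binomial counting and explains each summand as block length over window size. The paper's explicit dual vector, in exchange, needs no monotonicity reduction and is a self-contained certificate of the bound.
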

\begin{proof}
By Theorem \ref{thm: description of primes}, let $P_j$ denote the associated prime of $I$ that is associated with the monomial $x_j\cdots x_{i_j}$ for $j=1,\ldots,s$. For any other prime ideal $P\in \ass(I)$, we write $P\sim P_j$ if the prime $P$ is associated to a monomial $u$ that is a Borel move of $x_j\cdots x_{i_j}$. Let $A$ be the matrix of associated primes of $I$. We denote $A_P$ for the row that is indexed by the associated prime $P$. For any $\mathbf x\in \R^{|\ass(I)|}$, we denote $\mathbf{x}_P$ for the coordinate of $\mathbf{x}$ corresponding to the associated prime $P$.  
     
     Define $\mathbf y\in \R^{\mid \ass(I)|}$ as follows:
     $$\mathbf y_P=
        \begin{cases}
\frac{1}{\binom{i_{t_{k_r}}-1}{t_{k_r}-1}}, & \text{ if $P\sim P_{t_{k_r}},$ } \\
           \dfrac{1}{\binom{i_{t_{k_j}}-i_{t_{k_{j+1}}}-1}{t_{k_j}-i_{t_{k_{j+1}}}-1}}, &\begin{array}{l}
               \text{if } P\sim P_{t_{k_j}} \text{ and } P\cap \{x_1,\ldots,x_{i_{t_{k_{j+1}}}}\}=\emptyset\\ \text{for each } j=r-1,\ldots,1,
           \end{array} \\
           \dfrac{1}{\binom{i_{t_l}-i_{t_{k_1}}-1}{i_{t_l}-t_l}}\frac{s-i_{t_{k_1}}-1}{i_{t_l}-i_{t_{k_1}}}, & \text{ if $P\sim P_{t_l}$ and $P\cap\{x_1,\ldots,x_{i_{t_{k_1}}}\}=\emptyset$},\\
           \dfrac{1}{\binom{i_{t_l}-i_{t_{k_1}}}{i_{t_l}-s+1}}, &
           \begin{array}{l}
           \text{if } P\sim P_{t_0} \text{, } P\cap \{x_1,\ldots,x_{i_{t_{k_{1}}}}\}=\emptyset \text{ and } \\\{x_{i_{t_l}+1},\ldots,x_{i_s}\}\subseteq P,
           \end{array}\\
          \quad \quad 0, & \text{ otherwise }. 
        \end{cases}$$
        Note that $i_{t_j}\geq t_j$ for $0\leq j\leq k$, and $i_{t_{k_1}}+1\leq t_l\leq s$. Therefore, all the entries of $\mathbf {y} $ are well-defined. By \cite[Lemma 3.1]{CKSV2022-On-the-Waldschmidt-constant-of-sqfree-principal-Borel-ideals}, we assume that $i_s=n$. Now, we show that the inequality $(A^T\mathbf y)_e\leq 1$ holds for each $1\leq e\leq i_s$. 
        By the definition of $A$,  we have $$(A^T\mathbf y)_e=\sum_{x_e\in P}\mathbf{y}_P.$$ 
        So, in order to compute $(A^T\mathbf y)_e$, we need to find primes $P$ satisfying $x_e\in P$ and $\mathbf{y}_P\neq 0$.

        Observe that any square-free monomial of degree $t$ in variables $\{x_1,\ldots,x_{i+t-1}\}$ is a Borel move of $u=x_{i}x_{i+1}\cdots x_{i+t-1}$. Thus, the number of Borel moves of $u$ is $\binom{i+t-1}{t}$. Further, the number of Borel moves of $u$ which are divisible by a fixed square-free monomial of degree $p$ is equal to $\binom{i+t-1-p}{t-p}$. 

        For $1\leq e\leq i_{t_k}$, it follows from the definition of $\mathbf y$ that $P\sim P_{t_k}$ are the only primes containing $x_e$ for which $\mathbf y_P\neq 0$. There are $\binom{i_{t_k}-1}{t_k-1}$ such primes. Therefore, we have 
    $$(A^T\mathbf y)_e=\binom{i_{t_k}-1}{t_k-1}\frac{1}{\binom{i_{t_k}-1}{t_k-1}}=1.$$
Fix $1\leq p\leq r-1$ and let $ i_{t_{k_{p+1}}}+1\leq  e\leq i_{t_{k_{p}}}  $. If $P\in\ass(I)$ containing $x_e$ for which $\y_P\neq 0$, then $P\sim P_{t_{k_{p}}}$ and $P\cap \{x_1,\ldots,x_{i_{t_{k_{p+1}}}}\}=\emptyset$. We have $\binom{i_{t_{k_{p}}}-i_{t_{k_{p+1}}}-1}{t_{k_p}-i_{t_{k_{p+1}}}-1}$ many such primes. So, we get 
    $$(A^T\mathbf y)_e=\binom{i_{t_{k_p}}-t_{k_{p+1}}-1}{t_{k_p}-i_{t_{k_{p+1}}}-1}\frac{1}{\binom{i_{t_{k_p}}-t_{k_{p+1}}-1}{t_{k_p}-i_{t_{k_{p+1}}}-1}}=1.$$
Now, suppose $i_{t_{k_1}}+1\leq e\leq i_{t_l}$. Then the only primes $P$ containing $x_e$ for which $\mathbf y_P\neq 0$ are $P\sim P_{t_l}$ or $P\sim P_{t_0}$ and $P\cap \{x_1,\ldots,x_{i_{t_{k_1}}}\}=\emptyset$. There are $\binom{i_{t_l}-i_{t_{k_1}}-1}{i_{t_l}-t_l}$ primes $P$ containing $x_e$ such that $P\sim P_{t_l}$ and $\mathbf y_P\neq 0$. Also, the number of  primes $P$ containing $x_e$ for which $\mathbf y_P\neq0$, and $P\sim P_{t_0}$ is $\binom{i_{t_l}-i_{t_{k_1}}-1}{i_{t_l}-s}$. Therefore, by the choice of $l$, and addition principle of counting, we get $$(A^T\mathbf y)_e=\frac{s-i_{t_{k_1}}-1}{i_{t_l}-i_{t_{k_1}}}+\binom{i_{t_l}-i_{t_{k_1}}-1}{i_{t_l}-s}\frac{1}{\binom{i_{t_l}-i_{t_{k_1}}}{i_{t_l}-s+1}}=1.$$

Finally, if $i_{t_{l}}+1\leq e\leq i_s$. Then $x_e$ appears in every prime $P$ for which $\mathbf y_P\neq 0$ and  $P\sim P_{t_0}$. There are $\binom{i_{t_l}-i_{t_{k_1}}}{i_{t_l}-s+1}$ such primes. Therefore, we get $$(A^T\mathbf y)_e=\binom{i_{t_l}-i_{t_{k_1}}}{i_{t_l}-s+1}\frac{1}{\binom{i_{t_l}-i_{t_{k_1}}}{i_{t_l}-s+1}}=1.$$
Thus, using the duality theorem, we have $\widehat{\alpha}(I)\geq \sum_{P\in \ass(I)}\mathbf y_P$. 
Now,
\begin{align*}
        \sum_P{\mathbf y_P}=&\binom{i_{t_k}}{t_k-1}\frac{1}{\binom{i_{t_k-1}}{t_k-1}}+\sum_{j=1}^{r-1}\binom{i_{t_{k_j}}-i_{t_{k_{j+1}}}}{t_{k_j}-i_{t_{k_{j+1}}}-1}\frac{1}{\binom{i_{t_{k_j}}-i_{t_{k_{j+1}}}-1}{t_{k_j}-i_{t_{k_{j+1}}}-1}}\\&+\binom{i_{t_l}-i_{t_{k_1}}}{i_{t_l}-t_l+1} \frac{1}{\binom{i_{t_l}-i_{t_{k_1}}-1}{i_{t_l}-t_l}}\frac{s-i_{t_{k_1}}-1}{i_{t_l}-i_{t_{k_1}}}+\binom{i_{t_l}-i_{t_{k_1}}}{i_{t_l}-s+1}\frac{1}{\binom{i_{t_l}-i_{t_{k_1}}}{i_{t_l}-s+1}}\\
        =&\frac{i_{t_k}}{i_{t_{k}}-t_k+1}+\sum_{j=1}^{r-1}\frac{i_{t_{k_j}}-i_{t_{k_{j+1}}}}{i_{t_{k_j}}-{t_{k_{j}}}+1}+\frac{s-i_{t_{k_1}}-1}{i_{t_l}-t_{l}+1}+1.
    \end{align*}
    Hence, the desired result holds.
\end{proof}
We illustrate Theorem \ref{thm: lower-bound in general} with an example. This also shows that the lower bound of Theorem \ref{thm: lower-bound in general} is attained for infinitely many Borel ideals, while the recursive lower bound given in \cite[Theorem 4.4]{CKSV2022-On-the-Waldschmidt-constant-of-sqfree-principal-Borel-ideals} is strict.   
\begin{example}\label{exm: Lower bound is attained for infinitely} 
Let $m=x_2x_5x_7x_{i_4}x_{i_5}x_{i_6}$ with $i_4\geq 9$ and $I=\sfBorel(m)$. Then 
    $T'(m)=(1)$ regardless of $i_4,i_5,$ and $i_6$ and $l=3$. So the vector $\mathbf y$ in Theorem \ref{thm: lower-bound in general} is 
    $$\mathbf y_P=\begin{cases}
        1, & \text{ if } P=(x_1,x_2),\\
        \frac{3}{5} , & \text{ if } P\sim(x_3,\ldots,x_7), \text{ and $\{x_1,x_2\}\cap P=\emptyset$},\\
        \frac{1}{\binom{5}{2}}, & \text{ if } P\sim P_{6} \text{ and } P\cap \{x_1,x_2\}=\emptyset,\\
           0, & \text{ otherwise }. 
    \end{cases}$$
    This gives $\widehat{\alpha}(I)\geq 2.6$. For the vector
    $$\mathbf y^T=\left(\frac{1}{2},\frac{1}{2},\frac{1}{5},\frac{1}{5},\frac{1}{5},\frac{1}{5},\frac{1}{5},\frac{1}{5},\frac{1}{5},\frac{1}{5},0,0,0,0,0,\ldots,0\right)\in \R^{i_6},$$ it can be easily verified that $A\mathbf y\geq 1$. Then, in view of Theorem \ref{thm: Waldschmidt constant is optimal solution of an LPP}, we get $\widehat{\alpha}(I)\leq 2.6$. Therefore, we have $\widehat{\alpha}(I)=2.6$, while the lower bound in \cite[Theorem 4.4]{CKSV2022-On-the-Waldschmidt-constant-of-sqfree-principal-Borel-ideals} is $2.5$
\end{example}   
\section{Upper Bound}\label{section: upper bound}
In this section, we provide an improved upper bound than \cite[Theorem 3.4]{CKSV2022-On-the-Waldschmidt-constant-of-sqfree-principal-Borel-ideals} for $\widehat{\alpha}(\sfBorel(m))$ under certain conditions.  Let $m=x_{i_1}x_{i_2}\cdots x_{i_s}$ be a square-free monomial with $i_1=1$. Then $\widehat{\alpha}(\sfBorel(m))=1+\widehat{\alpha}(\sfBorel(\frac{m}{x_1}))$. Thus, without loss of generality, we assume $i_1\geq 2$.

\begin{theorem}\label{thm: Upper bound on Waldschmidt constant with conditions}
       
   Let $m=x_{i_1}x_{i_2}\cdots x_{i_s}$ be a square-free monomial with $2\leq i_1\leq s-1$,
   % $T(m)=(t_0,t_1,\ldots,t_k)$ and $IT(m)=(x_{i_{t_0}},\ldots, x_{i_{t_k}})$
   and $I=\sfBorel(m)$. Suppose $i_j-i_1\geq (j-1)(i_2-i_1)$, for all $3\leq j\leq i_1+1$. Then $$\widehat{\alpha}(I)\leq 2+\frac{s-i_1-1}{i_1(i_2-i_1)}.$$ In addition, if $i_{i_1+1}\geq s-1$ and $i_1(i_2-i_1)=i_{i_1+1}-i_1$, then the equality holds.
    \end{theorem}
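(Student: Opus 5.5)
The upper bound will come from Theorem \ref{thm: Waldschmidt constant is optimal solution of an LPP}: it suffices to exhibit one $\mathbf x\ge \mathbf 0$ with $A\mathbf x\ge \mathbf 1$ and $\mathbf 1^T\mathbf x=2+\frac{s-i_1-1}{i_1(i_2-i_1)}$. By Lemma \ref{lemma: solving submatrix is enough}, I only need to check three things for $\mathbf x$: $B\mathbf x\ge\mathbf 1$, that $\mathbf x$ is non-increasing on the first $s$ coordinates, and that $\mathbf x_s\ge \mathbf x_j$ for $j>s$.

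The rows of $B$ (up to redundancy) are the primes $P_j=\langle x_j,\dots,x_{i_j}\rangle$, $1\le j\le s$. Write $d=i_2-i_1$ and $c=\frac1{i_1d}$, and define
$$\mathbf x_j=\begin{cases}\frac1{i_1}, & 1\le j\le i_1,\\ c, & i_1<j\le \min(i_1d+s-1,\,i_s),\\ 0,&\text{otherwise.}\end{cases}$$
Since $c\le \frac1{i_1}$, the monotonicity conditions are immediate. If the truncation at $i_s$ is not active, the sum is exactly $1+(i_1d+s-1-i_1)c=2+\frac{s-i_1-1}{i_1d}$; if it is active, the sum is smaller, which only helps.

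Next I check the rows of $B$.
\begin{itemize}
\item For $j=1$, the row $P_1$ gives exactly $1$.
\item For $2\le j\le i_1$, the row $P_j$ picks up $\frac{i_1-j+1}{i_1}$ from the first block. It also picks up $(i_j-i_1)c\ge (j-1)dc=\frac{j-1}{i_1}$ from the second block. This uses the hypothesis $i_j-i_1\ge (j-1)d$ for $j\ge 3$, and the definition of $d$ for $j=2$; one also needs $i_j\le i_1d+s-1$ or else the block simply saturates, and both cases are easily handled. So the row sum is at least $1$.
\item For $i_1+1\le j\le s$, the row sum is $c\bigl(\min(i_j,i_1d+s-1)-j+1\bigr)$. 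The hypothesis at $j=i_1+1$ gives $i_{i_1+1}\ge i_1d+i_1$, hence $i_j\ge i_1d+j-1$, because the indices are strictly increasing. Also $i_1d+s-j\ge i_1d$ since $j\le s$. Thus the count is at least $i_1d$, and the row sum is at least $1$.
\end{itemize}
This proves the inequality.

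For equality, assume $i_{i_1+1}\ge s-1$ and $i_{i_1+1}-i_1=i_1d$. I will build a dual vector $\mathbf y$ with $A^T\mathbf y\le \mathbf 1$ of total weight $2+\frac{s-i_1-1}{i_1d}$, imitating the proof of Theorem \ref{thm: lower-bound in general} with $t_{k_1}$ replaced by the block $\{1,\dots,i_1\}$ and $t_l$ replaced by $i_1+1$. The weights are as follows.
\begin{itemize}
\item Weight $1$ on $P_1=\langle x_1,\dots,x_{i_1}\rangle$. This saturates $x_1,\dots,x_{i_1}$, so every other prime carrying weight must avoid these variables.
\item A uniform weight on the primes $P\sim P_{i_1+1}$ with $P\cap\{x_1,\dots,x_{i_1}\}=\emptyset$. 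These are the $(i_{i_1+1}-i_1)$-subsets of $\{x_{i_1+1},\dots,x_{i_{i_1+1}}\}$, and the condition $i_{i_1+1}-i_1=i_1d$ makes this the single prime $\langle x_{i_1+1},\dots,x_{i_{i_1+1}}\rangle$.
\item Uniform weights, as in the third and fourth cases of that proof, on primes $P\sim P_s$ avoiding $x_1,\dots,x_{i_1}$ and containing $x_{i_{i_1+1}+1},\dots,x_{i_s}$. Here $i_{i_1+1}\ge s-1$ is what guarantees that there are enough variables in the middle block.
\end{itemize}
The binomial counting would then be carried out column by column, splitting into $e\le i_1$, $i_1<e\le i_{i_1+1}$ and $e>i_{i_1+1}$, exactly as in the earlier proof.

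The main obstacle is this equality part. I must choose the weights of the $P_{i_1+1}$-type and $P_s$-type primes so that the middle columns sum to exactly $1$ while the total is $2+\frac{s-i_1-1}{i_1d}$. I would first try weight $\frac{s-i_1-1}{i_1d}$-proportional scaling on the single $P_{i_1+1}$ prime, and put the complementary mass on the $P_s$-type primes via the identity $\binom{N-1}{a}/\binom{N}{a+1}=\frac{a+1}{N}$ used before.
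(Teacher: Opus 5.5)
Your upper-bound argument matches the paper's: it uses the same vector and the same row checks on $P_j=\langle x_j,\dots,x_{i_j}\rangle$. The paper handles the remaining primes by noting directly that the vector is non-increasing, so Borel moves can only raise a row sum. You cite Lemma \ref{lemma: solving submatrix is enough} instead, which amounts to the same point. Your truncation at $i_s$ is never active, because the hypothesis at $j=i_1+1$ gives $i_s\ge i_{i_1+1}+(s-i_1-1)\ge s-1+i_1(i_2-i_1)$.

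For equality you choose exactly the paper's dual support, but you stop before fixing the weights, so this part is still only a plan. Here is what closes it. Set $N=i_{i_1+1}-i_1$. Put $\mathbf y_{P_1}=1$, put $\mathbf y_{P_{i_1+1}}=\frac{s-i_1-1}{N}$, and put $1/\binom{N}{s-i_1-1}$ on each $P_s$-type prime.
\begin{itemize}
\item Each $P_s$-type prime consists of the top block $x_{i_{i_1+1}+1},\dots,x_{i_s}$ together with $i_{i_1+1}-s+1$ variables from the middle block $x_{i_1+1},\dots,x_{i_{i_1+1}}$. This count is $\ge 0$ exactly because $i_{i_1+1}\ge s-1$. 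Hence there are $\binom{N}{s-i_1-1}$ such primes.
\item A column $e\le i_1$ meets only $P_1$, so its sum is $1$.
\item A middle column meets $P_{i_1+1}$ and $\binom{N-1}{i_{i_1+1}-s}$ of the $P_s$-type primes. By your identity its sum is $\frac{s-i_1-1}{N}+\frac{i_{i_1+1}-s+1}{N}=1$.
\item A top column lies in every $P_s$-type prime and in nothing else, so its sum is $1$.
\end{itemize}
The dual value is therefore $2+\frac{s-i_1-1}{N}$. The hypothesis $N=i_1(i_2-i_1)$ is used only at this point, to make this value match the upper bound.

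Your remark that this hypothesis is what makes $P_{i_1+1}$ the unique admissible prime of its type is mistaken. A Borel move of $x_{i_1+1}\cdots x_{i_{i_1+1}}$ avoiding $x_1,\dots,x_{i_1}$ is an $N$-subset of an $N$-element set, so it is unique regardless of that hypothesis.
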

    \begin{proof}
    By \cite[Lemma 3.1]{CKSV2022-On-the-Waldschmidt-constant-of-sqfree-principal-Borel-ideals}, we can assume that $i_s=n$. 
        Define the vector $\mathbf y\in \mathbb R^{i_s}$ as follows:
        $$\mathbf y_i=\begin{cases}
        \frac{1}{i_1}, &\text{ for } i=1,\ldots,i_1,\\
        \frac{1}{i_1(i_2-i_1)}, &\text{ for } i_1+1\leq i\leq s-1+i_1(i_2-i_1),\\
        ~0, &\text{ otherwise. }
    \end{cases}$$
    Since $i_{i_1+1}-i_1\geq i_1(i_2-i_1)$ and $i_{i_1+1}\leq i_s-(s-i_1-1)$ , we have $s-1+i_1(i_2-i_1)\leq i_s$, and $\mathbf y\in \mathbb R^{i_s}$ is well-defined. Let $A$ be the matrix of associated primes of $I$. Denote $A_P$ the row of $A$ indexed by $P$. We now show that $A_P\mathbf y\geq 1$.
    
Denote $P_j=\langle x_j,\ldots,x_{i_j}\rangle$ for each $1\leq j\leq s$. 
Clearly, one has $A_{P_1}\mathbf y= 1$. 
Now, suppose $2\leq j\leq s$. If $i_j>s-1+i_1(i_2-i_1)$, then it follows from the fact $\frac{1}{i_1}\geq \frac{1}{i_1(i_2-i_1)}$ that $$A_{P_j}\mathbf y\geq \frac{s-1+i_1(i_2-i_1)-j+1}{i_1(i_2-i_1)}=1+\frac{s-j}{i_1(i_2-i_1)}\geq 1.$$
Now, assume that $i_j\leq s-1+i_1(i_2-i_1)$. Then, for $2\leq j\leq i_1$, we have $$A_{P_j}\mathbf y=\frac{i_1-j+1}{i_1}+\frac{i_j-i_1}{i_1(i_2-i_1)}\geq \frac{i_1-j+1}{i_1}+\frac{(j-1)(i_2-i_1)}{i_1(i_2-i_1)}=1.$$
For $i_1+1\leq j\leq s$, we have $A_{P_j}\mathbf y=\frac{i_j-j+1}{i_1(i_2-i_1)}\geq \frac{i_j-j+1}{i_{i_1+1}-i_1}\geq 1$, where the last inequality follows from the fact that $i_j\geq i_{i_1+1}+j-i_1-1$. Since $\frac{1}{i_1}\geq \frac{1}{i_1(i_2-i_1)}$, any prime $P$ associated to a Borel move of $x_j\cdots x_{i_j}$ for $1\leq j\leq s$ satisfies $A_P\mathbf y\geq 1 $. Thus, by Theorem \ref{thm: Waldschmidt constant is optimal solution of an LPP}, we get the desired result after adding all the entries of $\mathbf y$.

For the equality, we assume that $i_{i_1+1}\geq s-1$ and $i_1(i_2-i_1)=i_{i_1+1}-{i_1}$. For this, define $\y\in \R^{\mid \ass(I)|}$ as follows:
         $$\y_P=
        \begin{cases}
           \quad1, & \text{ if } P=P_1 
           ,\\
           \frac{s-i_1-1}{i_{i_1+1}-i_1}, & \text{ if } P=P_{i_1+1}
           ,\\
           \frac{1}{\binom{i_{i_1+1}-i_1}{s-i_1-1}}, & \text{ if } P\sim P_s, P\cap \{x_1,\ldots,x_{i_1}\}=\emptyset \text{ and }\{x_{i_{i_1+1}+1},\ldots,x_{i_s}\}\subset P,\\
           \quad0, & \text{ otherwise }. 
        \end{cases}$$ Since $i_{i_1+1}\geq s-1$, the entries of $\y$ are well-defined. We now show that for $1\leq e\leq i_s$, the inequality $(A^T\y)_e\leq 1$ is satisfied. 
        
        For each $1\leq e\leq i_1$, note that $P_1$ is the only prime containing $x_e$ for which $\y_P\neq 0$. So, we have $(A^T\y)_e=1$. Now, suppose $i_1+1\leq e\leq i_{i_1+1}$. Then primes $P$ containing $x_e$ for which $\y_P\neq 0$ are $P_{i_1+1}$ and $\binom{i_{i_1+1}-i_1-1}{s-i_1-1}$ many primes $P\sim (x_s,\ldots,x_{i_s})$. This gives $$(A^T\y)_e=\frac{s-i_1-1}{i_{i_1+1}-i_1}+\binom{i_{i_1+1}-i_1-1}{s-i_1-1}\frac{1}{\binom{i_{i_1+1}-i_1}{s-i_1-1}}=
        1.$$
        
     For $i_{i_1+1}+1\leq e\leq i_s$, the variable $x_e$ appears in every primes $P$ such that $P\sim (x_s,\ldots,x_{i_s})$ and $\y_P\neq 0$. There are $\binom{i_{i_1+1}-i_1}{s-i_1-1}$ many such primes. Therefore, we get $(A^T\y)_e=\frac{1}{\binom{i_{i_1+1}-i_1}{s-i_1-1}}\binom{i_{i_1+1}-i_1}{s-i_1-1}=1$. Thus, by Theorem \ref{thm: Waldschmidt constant is optimal solution of an LPP} and using duality, we obtain $$\widehat{\alpha}(I)\geq 1+\frac{s-i_1-1}{i_{i_1+1}-i_1}+\binom{i_{i_1+1}-i_1}{s-i_1-1}\frac{1}{\binom{i_{i_1+1}-i_1}{s-i_1-1}}=2+\frac{s-i_1-1}{i_{i_1+1}-i_1}.$$ This completes the proof.
    \end{proof}
    The following example provides an infinite family of square-free principal Borel ideals for which the bound given in the above theorem is attained, while the upper bound in \cite[Theorem 3.4]{CKSV2022-On-the-Waldschmidt-constant-of-sqfree-principal-Borel-ideals} is strict.
    \begin{example}
        Let $I=\sfBorel(x_2x_5x_{8}x_{i_4}\cdots x_{i_8})$ with $i_4\geq 10$. Then, it follows from Theorem \ref{thm: Upper bound on Waldschmidt constant with conditions} that $\hat{\alpha}(I)=\frac{17}{6}$, while the upper bound obtained in \cite[Theorem 3.4]{CKSV2022-On-the-Waldschmidt-constant-of-sqfree-principal-Borel-ideals} is $\frac{37}{12}$.
    \end{example}
\section{The Waldschmidt Constant}\label{section: the waldschmidt constant}
In this section, we show that the lower bound of Theorem \ref{thm: lower-bound in general} is attained under certain conditions, thereby determining the Waldschmidt constant. 
\begin{theorem}\label{thm: equality of waldschmidt constant}
    Let $m=x_{i_1}\cdots x_{i_s}$ with $T(m)$, $T'(m)$, and $l$ be the largest such that $i_{t_{k_1}}+1\leq t_l$, and let $I=\sfBorel(m)$. Suppose 
    
         \begin{equation}\label{eq: condn for equality formula}
         \frac{i_{t_{k_j}}-t_p+1}{i_{t_{k_j}}-t_{k_j}+1}+\frac{i_{t_p}-i_{t_{k_j}}}{i_{t_{k_{j-1}}}-t_{k_{j-1}}+1}\geq 1,
    \end{equation}
    for all $t_p\in T(m)$ satisfying ${t_{k_{j}}}<t_p<{t_{k_{j-1}}}$ for some $j=1,\ldots r$ \textnormal{(}$t_{k_{0}}=t_l$\textnormal{)}.
    Then $$
        \widehat{\alpha}(I)= 1+\frac{s-i_{t_{k_1}}-1}{i_{t_l}-t_l+1}+\frac{i_{t_k}}{i_{t_{k}}-t_k+1}+\sum_{j=1}^{r-1}\frac{i_{t_{k_j}}-i_{t_{k_{j+1}}}}{i_{t_{k_j}}-{t_{k_{j}}}+1}.$$
\end{theorem}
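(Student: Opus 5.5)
Since Theorem \ref{thm: lower-bound in general} already gives the inequality $\geq$, it remains to exhibit a feasible vector $\mathbf x\in\R^{n}$ with $A\mathbf x\geq \mathbf 1$ whose coordinate sum equals the right-hand side. By \cite[Lemma 3.1]{CKSV2022-On-the-Waldschmidt-constant-of-sqfree-principal-Borel-ideals} we may assume $i_s=n$. The natural candidate is a step function constant on the blocks cut out by $IT'(m)$:
$$\mathbf x_e=\begin{cases}
\frac{1}{i_{t_k}-t_k+1}, & 1\leq e\leq i_{t_k},\\
\frac{1}{i_{t_{k_j}}-t_{k_j}+1}, & i_{t_{k_{j+1}}}+1\leq e\leq i_{t_{k_j}},\ j=r-1,\ldots,1,\\
\frac{1}{i_{t_l}-t_l+1}, & i_{t_{k_1}}+1\leq e\leq s-1,\\
0, & e\geq s.
\end{cases}$$
Since $t_{k_r}=t_k$, the first two lines together cover the $T'$-blocks. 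Summing block lengths times values gives exactly the stated formula, apart from the term $1$. The step that looks wrong is setting $\mathbf x_e=0$ for $e\geq s$: the prime $P_{t_0}=\langle x_s,\ldots,x_{i_s}\rangle$ must still be hit. I would instead put the value $\frac{1}{i_{t_l}-t_l+1}$ on $i_{t_{k_1}}+1\leq e\leq s-1$ and assign $\mathbf x_s=1$, with zeros afterwards. This contributes $\frac{s-i_{t_{k_1}}-1}{i_{t_l}-t_l+1}+1$, which is the remaining part of the formula.

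Feasibility would be checked through Lemma \ref{lemma: solving submatrix is enough}. Conditions (2) and (3) need $\mathbf x$ non-increasing on $1,\ldots,s$ and $\mathbf x_s\geq \mathbf x_j$ for $j>s$. The values increase with the block index because $i_{t_j}-t_j$ is strictly increasing along $T(m)$, so as written the step function is non-decreasing, not non-increasing. I would therefore reverse the ordering, noting the paper's convention that $t_k$ is the smallest position. The block with the largest weight then sits on the earliest variables, giving $\mathbf x$ non-increasing from $e=1$ until the jump to $\mathbf x_s=1$. That jump violates (2), so instead of invoking the lemma directly I would run its argument: Borel moves only shift indices downward, and the coordinate $s$ appears only in primes $\sim P_{t_0}$. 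Each such prime either contains $x_s$ or contains some variable of smaller index with positive weight in a block of total weight at least $1$.

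What remains is to verify $B\mathbf x\geq \mathbf 1$, that is, $A_{P_{t_p}}\mathbf x\geq 1$ for every $t_p\in T(m)$. For $t_p\in T'(m)$, the prime $P_{t_{k_j}}=\langle x_{t_{k_j}},\ldots,x_{i_{t_{k_j}}}\rangle$ has exactly $i_{t_{k_j}}-t_{k_j}+1$ variables. The block containing $i_{t_{k_j}}$ has the smallest weight among them, so the sum is at least $1$. For $t_p=t_l$ the same count applies. For $P_{t_0}$ the coordinate $x_s$ alone gives $1$.

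The main obstacle is the primes $P_{t_p}$ with $t_{k_j}<t_p<t_{k_{j-1}}$, which are skipped by $T'(m)$. Such a prime has $i_{t_{k_j}}-t_p+1$ variables inside block $j$, weighted $\frac{1}{i_{t_{k_j}}-t_{k_j}+1}$, and $i_{t_p}-i_{t_{k_j}}$ variables in block $j-1$, weighted $\frac{1}{i_{t_{k_{j-1}}}-t_{k_{j-1}}+1}$. The sum is exactly the left side of \eqref{eq: condn for equality formula}, so the hypothesis gives the bound. I need $i_{t_p}\leq i_{t_{k_{j-1}}}$, and $t_p>i_{t_{k_j}}$ fails by the construction of $T'(m)$, which makes this decomposition valid. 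For $j=1$ the block is the $t_l$-block, which explains the convention $t_{k_0}=t_l$. Primes with $t_p>t_l$ have $i_{t_p}\geq s$ and contain $x_s$. Finally, Theorem \ref{thm: Waldschmidt constant is optimal solution of an LPP} gives the upper bound, and combining it with Theorem \ref{thm: lower-bound in general} yields equality.
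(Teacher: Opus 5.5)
Your reduction to exhibiting a feasible vector and your count for the skipped indices $t_{k_j}<t_p<t_{k_{j-1}}$ match the paper. However, the vector you propose is infeasible, and the workaround you sketch does not repair it.

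The spike $\mathbf x_s=1$ with $\mathbf x_e=0$ for $e>s$ is the problem. The monomial $x_{s-1}x_{s+1}\cdots x_{i_s}$ is a Borel move of $x_s\cdots x_{i_s}$, so it is divisible by a minimal generator of $\sfBorel(\{x_j\cdots x_{i_j}\})$. Since $\mathbf x\geq 0$, every feasible vector must therefore satisfy $\mathbf x_{s-1}+\sum_{e>s}\mathbf x_e\geq 1$. Your vector gives $\mathbf x_{s-1}=1/(i_{t_l}-t_l+1)<1$ whenever $i_{t_{k_1}}\leq s-2$. Concretely, $m=x_2x_5x_7x_9x_{10}x_{11}$ satisfies the hypothesis, with $T(m)=(1,2,3,6)$, $T'(m)=(1)$ and $t_l=3$. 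Your vector is $(\tfrac12,\tfrac12,\tfrac15,\tfrac15,\tfrac15,1,0,0,0,0,0)$, and the associated prime $\langle x_5,x_7,x_8,x_9,x_{10},x_{11}\rangle$ receives only $\tfrac15$.

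The sentence meant to handle this (each prime ``contains $x_s$ or contains some variable of smaller index with positive weight in a block of total weight at least $1$'') bounds no row sum, because such a prime may meet that block in a single variable. Also, $x_s$ lies in every $P_{t_p}$ with $t_p\leq s\leq i_{t_p}$, not only in primes $\sim P_{t_0}$. More generally, your checks for $P_{t_0}$, $P_{t_l}$ and $t_p>t_l$ only examine the base primes $P_{t_p}$. Passing from these to all their Borel moves is exactly what Lemma~\ref{lemma: solving submatrix is enough} supplies, and that lemma is unavailable once $\mathbf x$ jumps up at $s$. Separately, no ``reversal'' is needed: since $i_{t_{k_r}}-t_{k_r}<\cdots<i_{t_{k_1}}-t_{k_1}<i_{t_l}-t_l$, the weights decrease along the blocks, so your step function as first written is already non-increasing.

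The missing idea is to spread the extra unit of weight over the tail, as the paper does, rather than concentrate it at $x_s$, so that the vector stays monotone. Set $a=i_{t_l}-t_l+1$ and put $\frac1a$ on all of $x_{i_{t_{k_1}}+1},\ldots,x_{i_{t_l}}$; recall $i_{t_l}\geq s$. On each block $x_{i_{t_{j+1}}+1},\ldots,x_{i_{t_j}}$ with $0\leq j\leq l-1$, put the constant $\frac{t_j-t_{j+1}}{(i_{t_j}-i_{t_{j+1}})a}$, which is less than $\frac1a$. The part beyond $x_{i_{t_{k_1}}}$ then sums to $\frac{(i_{t_l}-i_{t_{k_1}})+(s-t_l)}{a}=1+\frac{s-i_{t_{k_1}}-1}{a}$, the same total as yours. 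Now the vector is non-increasing on $1,\ldots,s$, and $\mathbf y_s=\frac1a$ dominates all later entries, so Lemma~\ref{lemma: solving submatrix is enough} reduces feasibility to the rows $P_{t_j}$. Your computations cover $t_j<t_l$. For $t_j\geq t_l$, the row sum telescopes to $\frac{i_{t_l}-t_j+1}{a}+\frac{t_j-t_l}{a}=1$.
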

\begin{proof}
    By \cite[Lemma 3.1]{CKSV2022-On-the-Waldschmidt-constant-of-sqfree-principal-Borel-ideals}, we may assume that $i_s=n$. For each $1\leq e\leq r$, set  $a_e=i_{t_{k_e}}-t_{k_e}+1$ and $a=i_{t_l}-t_l+1$. By Theorem \ref{thm: lower-bound in general}, it suffices to show that $\widehat{\alpha}(I)$ is bounded above by the desired value. For this,
    consider $\mathbf y\in \R^{i_s}$, where 
     \begin{align*}
    \mathbf y^T&=\Biggl(
\underbrace{\frac{1}{a_r}, \ldots, \frac{1}{a_r}}_{i_{t_k}},
\underbrace{\frac{1}{a_{r-1}}, \ldots, \frac{1}{a_{r-1}}}_{i_{t_{k_{r-1}}}-i_{t_k}},\ldots,
\underbrace{\frac{1}{a_1}, \ldots, \frac{1}{a_1}}_{i_{t_{k_1}}-i_{t_{k_2}}}, \underbrace{\frac{1}{{a}},\ldots \frac{1}{a}}_{i_{t_l}-i_{t_{k_1}}}, \\&\qquad \underbrace{\frac{(t_{l-1}-t_{l})}{(i_{t_{l-1}}-i_{t_{l}})a},\ldots, \frac{(t_{l-1}-t_{l})}{(i_{t_{l-1}}-i_{t_{l}})a}}_{i_{t_{l-1}}-i_{t_{l}}},\ldots,\underbrace{\frac{(t_{0}-t_1)}{(i_{t_{0}}-i_{t_1})a},\ldots,\frac{(t_{0}-t_1)}{(i_{t_{0}}-i_{t_1})a}}_{i_{t_{0}}-i_{t_1}}\Bigg).
\end{align*}
 Let $A$ be the matrix of associated primes of $I$. Then, our aim is to show that $A\y\geq 1$. To establish this, we use Lemma \ref{lemma: solving submatrix is enough}. Since the sequence $i_{t_k}-t_k,\ldots,i_{t_0}-t_0$ is strictly increasing, we get that the first $i_{t_l}$ entries of $\mathbf y$ form a non-increasing sequence. Also, we have $$ \frac{t_{j}-t_{j+1}}{i_{t_{j}}-i_{t_{j+1}}}<1\text{ for each $j=0,\ldots,l-1$}.$$ By Remark \ref{remk: existence of l ...}, we get $i_{t_l}\geq s$, and hence $\frac{1}{a}\geq \mathbf y_j$ for all $j\geq s$. Therefore, the conditions $(2)$ and $(3)$ of Lemma \ref{lemma: solving submatrix is enough} holds for $\mathbf y$. 
 
 Let $B$ be the submatrix of $A$ where the $j$-th row $B_j$ of $B$ is corresponding to the associated prime $P_{t_j}=\langle x_{t_{j}},\ldots, x_{i_{t_{j}}}\rangle,$ i.e., 
 $$B_j=(\underbrace{0,\ldots,0}_{t_j-1},\underbrace{1,\ldots,1}_{i_{t_j}-t_j+1},0,\ldots,0).$$ We now show that $\mathbf y$ satisfies the condition $B\mathbf y\geq 1$.
 For each $1\leq e\leq r$, consider the $k_e$-th row of $B$. Then, we have
 $$B_{k_e}\mathbf y =\sum_{p=t_{k_e}}^{i_{t_{k_e}}}\mathbf y_p =\frac{i_{t_{k_e}}-t_{k_e}+1}{a_e}=1.$$ 

 Now, suppose there exists $t_p\in T(m)$ with $t_{k_j}<t_p<t_{k_{j-1}}$ for some $j$. We claim that $t_p\leq i_{t_{k_j}}$. On contrary, suppose $t_p> i_{t_{k_j}}$, then $i_{t_{k_j}}<t_p<t_{k_{j-1}}<i_{t_{k_{j-1}}}.$ Since ${t_{k_{j-1}}}\in T'(m)$, we get $i_{t_{k_{j-1}}}\leq s-1$. So, we have $i_{t_{k_j}}<t_p$ and $i_{t_{p}}< s-1$. Then by Construction \ref{cons: definition of T'(m)}, we must have $t_{k_{j-1}}\leq t_p<t_{k_{j-1}}$, which is absurd.
 Consequently, we get $i_{t_{k_{j+1}}}<t_{k_j}<t_p\leq i_{t_{k_j}}<i_{t_p}<i_{t_{k_{j-1}}}$. Then, we have 
$$B_p\mathbf y=\sum_{q=t_p}^{i_{t_p}}\mathbf y_q=\frac{i_{t_{k_j}}-t_p+1}{a_j}+\frac{i_{t_p}-i_{t_{k_j}}}{a_{j-1}}\geq 1,$$
 where the last inequality follows from Equation \eqref{eq: condn for equality formula}. Similarly, if $t_{k_1}<t_p<t_l$ is satisfied for some $t_p\in T(m)$, then $B_p\mathbf y\geq 1$. Hence, we have shown that $B_j\y\geq 1$ for all rows $B_j$ of $B$ with $j>l$. 

 Now, if $j\leq l$, then $t_j\geq t_l$. Recall that by Remark \ref{remk: existence of l ...}, we have $i_{t_l}\geq s$. Consequently, we get $t_l\leq t_j\leq s\leq i_{t_l}\leq i_{t_j}$. So, we get 
\begin{align*}
    B_j\mathbf y&=\sum_{p=t_j}^{i_{t_j}}\mathbf y_p=\frac{i_{t_l}-t_j+1}{a}+\frac{t_{l-1}-t_{l}}{a}+\cdots+\frac{t_j-t_{j+1}}{a}=1.
\end{align*}
Thus, $B_j\mathbf y\geq 1$ for all rows $B_j$ of $B$, and hence $A\y\geq 1$ by Lemma \ref{lemma: solving submatrix is enough}. As a consequence of Theorem~\ref{thm: Waldschmidt constant is optimal solution of an LPP}, we obtain $\widehat{\alpha}(I)\leq \sum_{p=1}^n\mathbf y_p$. Summing the entries of $\y$ yields the desired value.
\end{proof}
We provide an example to be more illuminating. 
\begin{example}
    Let $m=x_2x_5x_7x_{i_4}x_{i_5}x_{i_6}$ with $i_4\geq 9$. Then $$T(m)\in \{(1,2,3,6),(1,2,3,4,6),(1,2,3,5,6),(1,2,3,4,5,6)\},$$ $T'(m)=(1)$ and $t_l=3$. The vector $\mathbf{y}$ of Theorem \ref{thm: equality of waldschmidt constant} is
    \begin{align*}
    \mathbf y^T&=\Biggl(
\frac{1}{2},\frac{1}{2},\frac{1}{5},\frac{1}{5},\frac{1}{5},\frac{1}{5},\frac{1}{5},\underbrace{\frac{(t_{l-1}-t_{l})}{5(i_{t_{l-1}}-i_{t_{l}})},\ldots, \frac{(t_{l-1}-t_{l})}{5(i_{t_{l-1}}-i_{t_{l}})}}_{i_{t_{l-1}}-i_{t_{l}}}, \\ &\ldots,\underbrace{\frac{(t_{0}-t_1)}{5(i_{t_{0}}-i_{t_1})},\ldots,\frac{(t_{0}-t_1)}{5(i_{t_{0}}-i_{t_1})}}_{i_{t_{0}}-i_{t_1}}\Bigg).
\end{align*}
Observe that Equation \eqref{eq: condn for equality formula} is satisfied for $t_p=2\in T(m)$. Therefore, $\widehat{\alpha}(I)=2.6$. On the other hand, the upper bound given in \cite[Theorem 3.4]{CKSV2022-On-the-Waldschmidt-constant-of-sqfree-principal-Borel-ideals} is $2.75$.
\end{example}
\begin{corollary}\label{cor: Waldschmidt constant}
    Let $m=x_{i_1}\cdots x_{i_s}$ with $T(m)$, $T'(m)$, and $l$ be largest such that $i_{t_{k_1}+1}\leq t_l$, and let $I=\sfBorel(m)$. If $T'(m)$ is a truncation of $T(m)$ and there is no $t_p\in T(m)$ such that ${t_{k_1}}<t_p<t_l$, then  $$\widehat{\alpha}(I)= 1+\frac{s-i_{t_{k_1}}-1}{i_{t_l}-t_l+1}+\frac{i_{t_k}}{i_{t_{k}}-t_k+1}+\sum_{j=1}^{r-1}\frac{i_{t_{k_j}}-i_{t_{k_{j+1}}}}{i_{t_{k_j}}-{t_{k_{j}}}+1}.$$ 
\end{corollary}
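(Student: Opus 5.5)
This corollary should follow from Theorem \ref{thm: equality of waldschmidt constant}. The plan is to check that its hypotheses make condition \eqref{eq: condn for equality formula} vacuous, i.e.\ that no $t_p\in T(m)$ lies strictly between two consecutive terms of the chain $t_{k_r}<t_{k_{r-1}}<\cdots<t_{k_1}<t_{k_0}=t_l$. Once that is established, Theorem \ref{thm: equality of waldschmidt constant} gives the stated formula directly. Theorem \ref{thm: lower-bound in general} then also shows that its lower bound is attained.

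First, interpret "$T'(m)$ is a truncation of $T(m)$": in the increasing order, $T'(m)=(t_{k_r},\ldots,t_{k_1})$ consists of an initial consecutive block $(t_k,t_{k-1},\ldots,t_{k-r+1})$ of $T(m)=(t_k,\ldots,t_0)$. So $t_{k_j}=t_{k-r+j}$ for each $j$, and consecutive members of $T'(m)$ are consecutive in $T(m)$. Consequently, for $2\le j\le r$ there is no $t_p\in T(m)$ with $t_{k_j}<t_p<t_{k_{j-1}}$, since such a $t_p$ would lie strictly between two adjacent entries of $T(m)$.

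The remaining case is $j=1$, where the condition concerns $t_p\in T(m)$ with $t_{k_1}<t_p<t_{k_0}=t_l$. This is exactly what the second hypothesis excludes. Hence condition \eqref{eq: condn for equality formula} holds vacuously for every $j=1,\ldots,r$, and Theorem \ref{thm: equality of waldschmidt constant} applies.

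The only delicate point is bookkeeping. The corollary writes the defining condition for $l$ as $i_{t_{k_1}+1}\le t_l$, while the theorems use $i_{t_{k_1}}+1\le t_l$. I would read this as a typographical variant, so that $l$ is the same index as in Remark \ref{remk: existence of l ...}, and I would note this explicitly. With that reading the $l$ agrees with the one in the theorems, and we also have $i_{t_l}\ge s$, which the proof of Theorem \ref{thm: equality of waldschmidt constant} uses. I expect no genuine obstacle beyond making the notion of truncation precise and matching the indices.
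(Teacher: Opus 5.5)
Your proposal is correct and follows the paper's route: the paper just says the corollary follows immediately from Theorem \ref{thm: equality of waldschmidt constant}. You supply the omitted check that the truncation hypothesis makes condition \eqref{eq: condn for equality formula} vacuous for $2\le j\le r$, and that the second hypothesis handles $j=1$. Your reading of $i_{t_{k_1}+1}\le t_l$ as a typo for $i_{t_{k_1}}+1\le t_l$ is also right, consistent with Theorem \ref{thm: lower-bound in general} and Remark \ref{remk: existence of l ...}.
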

\begin{proof}
    Immediately follows from Theorem \ref{thm: equality of waldschmidt constant}.
\end{proof}
\begin{example}
 Let $m=x_2x_4x_5x_7x_8x_9x_{11}x_{12}x_{13}x_{14}x_{16}x_{17}x_{18}x_{19}x_{20}$. Then, we have $T(m)=(1,3,6,10,15)$, $IT(m)=(2,5,9,14,20)$, and $T'(m)=(1,3,6,10)$. Observe that $T'(m)$ is a truncation of $T(m)$. Therefore, using Corollary \ref{cor: Waldschmidt constant}, we have $\widehat{\alpha}(\sfBorel(m))=5.$ 
\end{example}
To conclude this section, 
we define two sequences $a_j$ and $b_j$ for $j\geq 0$ as follows:$$a_j=a_{j-1}+j-1\text{ with }a_0=1\quad\text{ and }\quad b_j=b_{j-1}+j\text{ with }b_0=0.$$ It can be verified that $a_j$ and $b_j$ satisfy the following:
\begin{enumerate}
    \item $b_j-a_j=j-1$ for all $j\geq 0$, and 
    \item $a_j=b_{j-1}+1$ for all $j\geq 1$.
\end{enumerate}
Denote $\ell(T(m))$ the number of coordinates of $T(m)$. The next result is a generalization of \cite[Corollary 4.2]{CKSV2022-On-the-Waldschmidt-constant-of-sqfree-principal-Borel-ideals}. 
\begin{theorem}
    Let $k\in \mathbb N$. Then for every rational number $\frac{a}{b}\geq k$, there exists a square-free monomial $m\in S$ such that $\ell (T(m))=k$ and $\widehat{\alpha}(\sfBorel(m))=\frac{a}{b}.$
\end{theorem}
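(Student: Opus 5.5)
The plan is to realise the required value through the exact formula of Theorem \ref{thm: equality of waldschmidt constant}, in the special form of Corollary \ref{cor: Waldschmidt constant}. I will build $m$ so that the first $k-1$ entries of $T(m)$ form $T'(m)$ and each contributes exactly $1$ to the formula. The last entry $t_0=s$ will serve as $t_l$ and contribute $1+\frac{s-i_{t_{k_1}}-1}{i_{s}-s+1}$, and this last fraction is tuned to equal $\frac ab-k$. The degenerate case $\frac ab=k$ will be handled separately, by forcing $s-i_{t_{k_1}}-1=0$ or by directly citing \cite[Corollary 4.2]{CKSV2022-On-the-Waldschmidt-constant-of-sqfree-principal-Borel-ideals} for the base pattern. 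For $k=1$ the statement is exactly that corollary, so I assume $k\ge 2$.

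\textbf{The construction.} I use the sequences $a_j,b_j$ just introduced. For $j=1,\dots,k-1$ the $j$-th block is the set of consecutive positions $a_j,\dots,b_j$ of $m$ (there are $j$ of them). The $j$-th block ends at $t_{k_{k-j}}=b_j$, and the index there is chosen as $i_{b_j}=b_{j+1}$. Between blocks the indices run consecutively, and the jumps are placed exactly at the positions $b_j$, so $T(m)$ has the prescribed entries. With this choice $i_{t}-t+1=b_{j+1}-b_j+1=j+2$, and the numerators $i_{t_{k_j}}-i_{t_{k_{j+1}}}$ telescope.

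The identities $b_j-a_j=j-1$ and $a_j=b_{j-1}+1$ are precisely what I expect to make each summand in Corollary \ref{cor: Waldschmidt constant} equal to $1$. They should also guarantee $i_{t}<s$ for these entries and $i_{t_{k_j}}<t_{k_{j-1}}$, which makes $T'(m)$ a truncation of $T(m)$. If the exact arithmetic is off by a shift, I will adjust the block lengths (for instance take blocks of length $j$ ending at $b_j$ with index $b_j+j$) until each ratio equals $1$; this is routine bookkeeping.

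\textbf{Tuning the last block.} After the last block ends at position $p=b_{k-1}$, I append a tail of $s-p$ variables with a single jump. The first tail index is chosen so that no further element of $T(m)$ arises except $t_0=s$. This gives $\ell(T(m))=k$, $t_l=t_0=s$, and no $t_p$ strictly between $t_{k_1}$ and $t_l$. The remaining term is $\frac{s-i_{t_{k_1}}-1}{i_s-s+1}$.

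Write $\frac ab-k=\frac{c}{d}$ with $c\ge0$ and $d\ge1$, and scale if needed. I then choose $s$ with $s-i_{t_{k_1}}-1=c$ and $i_s$ with $i_s-s+1=d$, enlarging $c,d$ proportionally so that $i_s\ge s$ and the tail is a valid square-free monomial. Remark \ref{remk: existence of l ...} then requires $i_{t_l}=i_s\ge s$, which holds. The Corollary yields $\widehat\alpha=(k-1)+1+\frac cd=\frac ab$.

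\textbf{Main obstacle.} I expect the main difficulty to be the simultaneous consistency check. The blocks must produce exactly $k$ entries of $T(m)$, all of the first $k-1$ must lie in $T'(m)$ (each with $i_t\le s-1$), and the freely scaled tail must create neither an extra entry of $T(m)$ nor an extra entry of $T'(m)$. The worry is that scaling $c$ up could push $s$ so far that some block index $i_{t}$ stops being at most $s-1$, or that it forces a gap inside the tail.

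My first attempt is to fix the blocks first, which bounds $i_{t_{k_1}}$ by a constant depending only on $k$. I then take the tail consecutive up to position $s-1$ and place the only jump at the final index $i_s$, so that all conditions reduce to $c\ge0$ and $d\ge1$ after replacing $(c,d)$ by $(Nc,Nd)$ for large $N$. If $c=0$ is problematic there, I fall back to the $k=1$ realisation of \cite[Corollary 4.2]{CKSV2022-On-the-Waldschmidt-constant-of-sqfree-principal-Borel-ideals}, combined with the reduction $\widehat\alpha(\sfBorel(x_1m'))=1+\widehat\alpha(\sfBorel(m'))$ noted at the start of Section \ref{section: upper bound}.
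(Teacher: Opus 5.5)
Your idea of tuning the $t_l$-summand works, but the construction you commit to first fails; only your parenthetical alternative is correct.

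\textbf{The committed skeleton fails.} With $i_{b_j}=b_{j+1}$ you get $i_{b_{j-1}}=b_j$. So the jump position $b_j$ is not $>i_{b_{j-1}}$, and Construction~\ref{cons: definition of T'(m)} skips it. You get $T'(m)=(b_1,b_3,b_5,\dots)$, which is not a truncation of $T(m)$. Even ignoring that, the ratios would be $\frac{b_{j+1}-b_j}{b_{j+1}-b_j+1}=\frac{j+1}{j+2}$, not $1$.

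\textbf{The skeleton that works.} Put block $j$ on positions $a_j,\dots,b_j$ with indices $b_j+1,\dots,b_j+j$, so $m$ begins $x_2x_4x_5x_7x_8x_9\cdots$. Then $i_{b_{j-1}}+1=b_{j-1}+j=b_j$, so every $b_j$ enters $T'(m)$. Also $t_k=1$ and $i_1=2$ make the first summand $1$, and
\[
\frac{i_{b_j}-i_{b_{j-1}}}{i_{b_j}-b_j+1}=\frac{(b_j+j)-(b_{j-1}+j-1)}{j+1}=1 .
\]

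\textbf{The tail.} Its only jump must lie between $p=b_{k-1}$ and $p+1$, as in your ``Tuning'' paragraph. Your later phrase ``place the only jump at the final index $i_s$'' would either add $s-1$ to $T(m)$ or delete $p$ from it. With positions $p+1,\dots,s$ consecutive you get $i_s-s+1=i_{p+1}-b_{k-1}$. The real constraints are $s\ge b_k$ and $i_{p+1}\ge i_p+2=b_k+1$. Corollary~\ref{cor: Waldschmidt constant} then gives
\[
\widehat{\alpha}(\sfBorel(m))=k+\frac{s-b_k}{i_{p+1}-b_{k-1}},\qquad i_{p+1}-b_{k-1}\ge k+1 .
\]
Choosing $s-b_k=M(a-kb)$ and $i_{p+1}-b_{k-1}=Mb$ with $Mb\ge k+1$ finishes the proof.

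\textbf{The case $a/b=k$.} This is simply $s=b_k$. For $k=5$ and $i_{p+1}=b_k+1$ it is exactly the paper's example $x_2x_4x_5\cdots x_{20}$ with value $5$. So no fallback is needed. Your $x_1$-prepending fallback could not give $\ell(T(m))=k$ anyway: after one step the monomial starts with $x_1$, and further prepending creates no new jump.

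\textbf{Comparison with the paper.} The paper puts the free parameter in the \emph{first} summand. It takes $m_1=x_b\cdots x_{c+b}$, so that $\frac{i_{t_k}}{i_{t_k}-t_k+1}=\frac{c+b}{b}$. The later blocks have lengths $b+j-2$, so each middle summand is $1$, and $i_{t_{k_1}}=s-1$, so the $t_l$-summand vanishes. This encodes $a/b$ directly, with no scaling. Your version keeps a skeleton independent of $a/b$ and puts all the freedom in $\frac{s-i_{t_{k_1}}-1}{i_{t_l}-t_l+1}$, the summand specific to the regime $i_{t_k}<s$. The cost is a scaling step, forced by the lower bound $k+1$ on that summand's denominator. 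You also use the Corollary in the case where this summand is nonzero, which the paper's proof never needs.
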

\begin{proof}
Write $\frac{a}{b}=(k-1)+\frac{c+b}{b}$ with $c\geq 0$. Let $m_1=x_b\cdots x_{c+b}$, and
\begin{align*}
    & m_j=x_{c+(j-1)b+a_{j-1}+1}\cdots x_{c+jb+b_{j-1}} \text{ for $2\leq j\leq k$}
\end{align*}
    Set $m=m_1m_2\cdots m_k$. Observe that $\deg(m_1)=c+1$, and for each $2\leq j\leq k$, we have $\deg(m_j)=b+{j-2}.$ Thus $\deg(m_1\cdots m_j)=c+(j-1)b+a_{j-1}$. By the construction of $m$, we have 
\begin{align*}
    &T(m)=\left(c+1,c+b+1,\ldots,c+(k-2)b+a_{k-2},c+(k-1)b+a_{k-1}\right), \\ 
    &IT(m)=(c+b,c+2b+b_1,\ldots,c+(k-1)b+b_{k-2},c+kb+b_{k-1}).
\end{align*}
Since $i_{c+(j-1)b+a_{j-1}}=c+jb+b_{j-1},$ and $b_{j-1}=a_j-1$ for $1\leq  j\leq k-1$, we get
$$T'(m)=\left(c+1,c+b+1,\ldots,c+(k-2)b+a_{k-2}\right).$$
Following the notation of Construction \ref{cons: definition of T'(m)}, note that $i_{t_{k_1}}=\deg(m)-1$.
   Therefore, by Corollary \ref{cor: Waldschmidt constant}, we get 
   \begin{align*}
       \widehat{\alpha}(\sfBorel(m))&=1+0+\frac{c+b}{b}+\sum_{j=2}^{k-1}\frac{c+jb+b_{j-1}-(c+(j-1)b+b_{j-2})}{(c+jb+a_{j})-(c+(j-1)b+a_{j-1})}\\
&=1+\frac{c+b}{b}+\sum_{j=2}^{k-1}\frac{b_{j-1}-b_{j-2}+b}{a_j-a_{j-1}+b}=
\frac{a}{b},
   \end{align*}
   where the last equality follows from the fact that $b_{j-1}-b_{j-2}=j-1=a_j-a_{j-1}$. Hence, the statement follows.
\end{proof}

\section{Square-free Borel and Lex-segment Ideals}\label{sec: square-free Borel and lex ideals}

In this section, we obtain lower and upper bounds for the Waldschmidt constant of square-free Borel ideals. For this, we introduce the following notation. 

  Let $m_1=x_{i_1}\cdots x_{i_s}$ and $m_2=x_{j_1}\cdots x_{j_s}$ be square-free monomials in the polynomial ring $S=\K[x_1,\ldots,x_n]$ over a field $\K$.  Denote $m_1\vee m_2:=x_{k_1}\cdots x_{k_s}$, where $k_r=\max\{i_r,j_r\}$ for $1\leq r\leq s$, and $m_1\wedge m_2:=x_{l_1}\cdots x_{ls}$ with $l_r=\min\{i_r,j_r\}$ for $1\leq r\leq s$. For $d\in \mathbb N$, the set of monomials of degree $d$ in $S$ is denoted by $\mon_d(S)$.

\begin{theorem}\label{thm: upper and lower bound of Borel ideals}
    Let $T=\{m_1,\ldots,m_t\}\subset \mon_s(S)$ be a subset consisting of square-free monomials, and $I=\sfBorel(T)$. Then $$\widehat{\alpha}(\sfBorel(m_1\vee \cdots\vee m_t))\leq \widehat{\alpha}(I)\leq \widehat{\alpha}(\sfBorel(m_1\wedge\cdots \wedge m_t)).$$
\end{theorem}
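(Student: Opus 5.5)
The plan is to prove both inequalities from one monotonicity principle: if $J\subseteq J'$ are square-free monomial ideals, then $\widehat{\alpha}(J)\leq\widehat{\alpha}(J')$. The rest is to sandwich $I$ between the two principal ideals, namely to show
$$\sfBorel(m_1\wedge\cdots\wedge m_t)\subseteq I\subseteq \sfBorel(m_1\vee\cdots\vee m_t),$$
and then read off the stated chain of inequalities.

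\emph{Step 1 (monotonicity).} For square-free monomial ideals, $J\subseteq J'$ gives $J^{(k)}\subseteq J'^{(k)}$ for every $k$. Indeed, each minimal prime $Q$ of $J'$ contains $J$, so $Q$ contains some minimal prime $P$ of $J$. Hence $J^{(k)}\subseteq P^k\subseteq Q^k$, and intersecting over all $Q$ gives the claim. Therefore $\alpha(J^{(k)})\geq \alpha(J'^{(k)})$ for every $k$, and dividing by $k$ and taking limits yields $\widehat{\alpha}(J)\geq\widehat{\alpha}(J')$. Equivalently, the larger ideal has the smaller Waldschmidt constant. The inclusions to prove therefore run opposite to the inequalities: the ideal generated by the meet must lie \emph{inside} $I$, and the ideal generated by the join must \emph{contain} $I$.

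\emph{Step 2 (combinatorial description of Borel moves).} I will use the standard criterion: for square-free monomials $u=x_{a_1}\cdots x_{a_s}$ and $v=x_{b_1}\cdots x_{b_s}$ with increasing indices, $u$ is a square-free Borel move of $v$ if and only if $a_r\leq b_r$ for all $r$. For the forward direction, apply the moves one at a time. Each elementary move replaces some $x_j$ with $j$ smaller and then re-sorts the indices, and this never increases any coordinate of the sorted index vector. For the converse, send the $r$-th index of $v$ directly to $a_r$. The indices $a_r$ are distinct, so the result is the square-free monomial $u$. A direct move $x_{b_r}\to x_{a_r}$ is legitimate even when $x_{a_r}$ currently divides the product, because the replacements can be performed in increasing order of $r$. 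Alternatively, the definition with simultaneous moves $\prod x_{i_t}/x_{j_t}$ allows this at once.

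\emph{Step 3 (the sandwich).} Each $m_r$ has sorted index vector coordinatewise at least that of $m_1\wedge\cdots\wedge m_t$. The meet $m_1\wedge\cdots\wedge m_t$ is square-free, since the coordinatewise minimum of strictly increasing sequences is strictly increasing. By Step 2 it is a Borel move of $m_1$, so it lies in $I$, and hence $\sfBorel(m_1\wedge\cdots\wedge m_t)\subseteq I$. (Square-free Borel ideals are closed under square-free Borel moves, by transitivity of $\leq$ coordinatewise.) Similarly, each $m_r$ is coordinatewise at most $m_1\vee\cdots\vee m_t$, which is again square-free. So every generator of $I$, being a Borel move of some $m_r$, is a Borel move of the join, and $I\subseteq\sfBorel(m_1\vee\cdots\vee m_t)$. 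Combining with Step 1 gives the stated inequalities.

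The main obstacle is Step 2, the converse direction: realizing an arbitrary coordinatewise-smaller square-free monomial by legitimate moves without creating squares. If the simultaneous-move definition is taken literally, the argument is immediate. Otherwise I would induct on $\sum_r (b_r-a_r)$, at each stage moving the smallest index $r$ with $a_r<b_r$, and checking that the target variable is not already present.
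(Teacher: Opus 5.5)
Your proposal is correct and follows the paper's route: establish $\sfBorel(m_1\wedge\cdots\wedge m_t)\subseteq I\subseteq\sfBorel(m_1\vee\cdots\vee m_t)$ using the coordinatewise description of square-free Borel moves, then apply the fact that $\widehat{\alpha}$ reverses inclusion. You also justify two things the paper takes for granted, namely the monotonicity of $\widehat{\alpha}$ (via minimal primes and symbolic powers) and that the meet and join are square-free. One small wording point: doing the replacements in increasing order of $r$ ensures $x_{a_r}$ never divides the current product, rather than making the move legitimate ``even when'' it does.
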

\begin{proof}
  Let $m_1\vee \cdots\vee m_t=x_{k_1}\cdots x_{k_s}$ and $m_1\wedge\cdots \wedge m_t=x_{l_1}\cdots x_{ls}$. Since $l_r$ is the minimum among the $r$-th indices of elements of $T$, it follows that $m_1\wedge\cdots \wedge m_t$ can be obtained from a Borel move of every element from $T$. Similarly, every element of $T$ can be obtained from a Borel move of $m_1\vee \cdots\vee m_t$. Consequently, we get $$\sfBorel(m_1\wedge\cdots \wedge m_t)\subset I\subset \sfBorel(m_1\vee \cdots\vee m_t). $$
  From the fact that if $J \subset K$, then $\widehat{\alpha}(J)\geq \widehat{\alpha}(K),$ the desired result follows. 
\end{proof}
Now, we proceed to study the Waldschmidt constant of square-free lex-segment ideals.
\begin{definition}\label{def: square-free lex ideal}
 Let $S=\K[x_1,\ldots,x_n]$ be a polynomial ring over a field $\K$. Consider the monomial order $<_{\lex}$ on $S$ induced by the ordering $x_1>x_2>\cdots>x_n>1$. For a square-free monomial $m\in \mon_s(S)$, the \emph{square-free lex-segment ideal} of $m$ is defined as $$\sflex(m)=\langle u\in\mon_s(S):\sqrt{u}=u, ~m\leq_{\lex}u\rangle.$$
 \end{definition}
 \begin{theorem}\label{thm: the Waldschmidt constant of lex ideals}
  
     Let $m=x_{i_1}\cdots x_{i_s}$ be a square-free monomial in $S$. Then 
     \begin{enumerate}
         \item $\widehat{\alpha}(\sflex(m))\leq 1+\frac{s-1}{i_1}$, and 
         \item $\widehat{\alpha}(\sflex(m))\geq 
         \begin{cases}
             1+\frac{s-1}{i_1}, \qquad\text{ if } i_{1}\geq s\\
             2+\frac{s-i_1-1}{n-s+1},\quad\text{ if } i_{1}\leq s-1.
         \end{cases}$
     \end{enumerate}
     In particular, if $i_{1}\geq s-1$, then $\widehat{\alpha}(\sflex(m))=1+\frac{s-1}{i_1}.$
    
 \end{theorem}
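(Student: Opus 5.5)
The plan is to squeeze $\sflex(m)$ between square-free Borel ideals, or work with its minimal primes directly, and then use Theorem \ref{thm: Waldschmidt constant is optimal solution of an LPP} together with the bounds already proved. Throughout, the minimal primes of the square-free monomial ideal $\sflex(m)$ are the ideals $\langle x_j : j\in C\rangle$ with $C$ a minimal vertex cover of the generators. Equivalently, a set $P$ of variables contains a minimal prime exactly when no generator of $\sflex(m)$ is supported on the complement of $P$.

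\emph{Upper bound (1).} I will exhibit a feasible vector $\mathbf y\in\R^n$ for the linear program of Theorem \ref{thm: Waldschmidt constant is optimal solution of an LPP}:
\[
\mathbf y_j=\tfrac{1}{i_1}\ \text{ for } 1\le j\le i_1+s-1,\qquad \mathbf y_j=0 \text{ otherwise}.
\]
If $i_1+s-1>n$, I would first enlarge $n$ as in \cite[Lemma 3.1]{CKSV2022-On-the-Waldschmidt-constant-of-sqfree-principal-Borel-ideals}, or simply truncate the vector. The sum of the entries is $1+\frac{s-1}{i_1}$. For feasibility, fix an associated prime $P$ and suppose $A_P\mathbf y<1$. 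Then $P$ contains fewer than $i_1$ of the variables $x_1,\dots,x_{i_1+s-1}$, so at least $s$ of them lie outside $P$. Let $u$ be the product of the $s$ smallest-indexed such variables, and let $a$ be its smallest index, so $a\le i_1$.
\begin{itemize}
\item If $a<i_1$, then $u>_{\lex} m$.
\item If $a=i_1$, then $u=x_{i_1}x_{i_1+1}\cdots x_{i_1+s-1}$. Its $r$-th index is at most $i_r$ because $m$ is square-free, so $u\ge_{\lex} m$.
\end{itemize}
In either case $u$ is a generator of $\sflex(m)$ disjoint from $P$, which contradicts $P\supseteq \sflex(m)$. Hence $A\mathbf y\ge \mathbf 1$, and (1) follows.

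\emph{Lower bound (2).} Every square-free $u=x_{j_1}\cdots x_{j_s}\ge_{\lex} m$ satisfies $j_1\le i_1$ and $j_r\le n-s+r$. Hence $u$ is a Borel move of $m':=x_{i_1}x_{n-s+2}\cdots x_n$, so $\sflex(m)\subseteq\sfBorel(m')$ and therefore $\widehat\alpha(\sflex(m))\ge\widehat\alpha(\sfBorel(m'))$, exactly as in the proof of Theorem \ref{thm: upper and lower bound of Borel ideals}. For $m'$ (assuming $i_1\le n-s$, since otherwise $m'$ needs a small modification) we have $T(m')=(1,2)$ and $IT(m')=(i_1,n)$.
\begin{itemize}
\item If $i_1\ge s$, then $i_{t_k}\ge s$, and \cite[Theorem 4.1]{CKSV2022-On-the-Waldschmidt-constant-of-sqfree-principal-Borel-ideals} gives $\widehat\alpha=1+\frac{s-1}{i_1}$. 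Alternatively, one can build the dual vector directly: weight $1/\binom{i_1-1}{0}$ on $\langle x_1,\dots,x_{i_1}\rangle$ together with a uniform weight on suitable Borel moves of $x_2\cdots x_n$.
\item If $i_1\le s-1$, then Theorem \ref{thm: lower-bound in general} applies with $T'(m')=(1)$, $t_{k_1}=t_k=1$ and $t_l=2$, where $i_{t_l}=n\ge s$. Its right-hand side becomes $1+\frac{s-i_1-1}{n-1}+1$. This has denominator $n-1$ rather than the $n-s+1$ in the statement, so I need to recheck the indexing: the relevant index should be that of the prime $\langle x_2,\dots,x_n\rangle$ versus the generator $x_{n-s+2}$. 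Possibly the comparison ideal should instead be $\sfBorel(x_{i_1}x_{n-s+2}\cdots x_n)$ read through Theorem \ref{thm: description of primes}.
\end{itemize}

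\emph{Equality and main obstacle.} For the final claim, when $i_1\ge s$ the bounds in (1) and (2) coincide. When $i_1=s-1$, both sides equal $2$. The main obstacle is matching the lower bound precisely, namely verifying the $T$, $T'$ and $l$ data for $m'$ and confirming that the formula of Theorem \ref{thm: lower-bound in general} yields exactly $2+\frac{s-i_1-1}{n-s+1}$. If it does not, I would construct the dual vector $\mathbf y_P$ for $\sfBorel(m')$ by hand, in the style of the equality part of Theorem \ref{thm: Upper bound on Waldschmidt constant with conditions}.
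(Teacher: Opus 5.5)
The gap is in part (2) for $i_1\le s-1$, which you leave unresolved. It comes from a miscomputed $T(m')$.

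By definition $t_0=s$. So for $m'=x_{i_1}x_{n-s+2}\cdots x_n$ with $i_1\le n-s$, the only jump is at position $1$, giving $T(m')=(1,s)$ and $IT(m')=(i_1,n)$. Your $(1,2)$ is right only when $s=2$. The second distinguished prime is therefore $\langle x_s,\dots,x_n\rangle$, which has $n-s+1$ variables. It is not $\langle x_2,\dots,x_n\rangle$, which for $s\ge 3$ is not even a minimal prime of $\sfBorel(m')$.

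With the correct data the computation goes through:
\begin{itemize}
\item $i_{t_k}=i_1\le s-1$ and $i_{t_0}=n\ge s$, so $T'(m')=(1)$, $r=1$ and $t_{k_1}=t_k=1$.
\item The largest $l$ with $i_1+1\le t_l$ is $l=0$, so $t_l=s$ and $i_{t_l}=n$.
\end{itemize}
Theorem \ref{thm: lower-bound in general} then reads
\[
\widehat{\alpha}(\sfBorel(m'))\geq 1+\frac{s-i_1-1}{n-s+1}+\frac{i_1}{i_1-1+1}=2+\frac{s-i_1-1}{n-s+1}.
\]
This is exactly the stated bound, so no hand-built dual vector is needed, and it is precisely how the paper concludes.

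The case $i_1=n-s+1$ that you set aside forces $m=m'=x_{n-s+1}\cdots x_n$. There $2+\frac{s-i_1-1}{n-s+1}=1+\frac{s-1}{i_1}$, which is the consecutive-monomial value from \cite[Corollary 4.2]{CKSV2022-On-the-Waldschmidt-constant-of-sqfree-principal-Borel-ideals}. Once the data are corrected, your check of the ``in particular'' clause for $i_1=s-1$ (both sides equal $2$) is also justified.

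Part (1) is correct but argued differently from the paper. The paper uses the containment $\sfBorel(x_{i_1}x_{i_1+1}\cdots x_{i_1+s-1})\subseteq\sflex(m)$ and quotes \cite[Corollary 4.2]{CKSV2022-On-the-Waldschmidt-constant-of-sqfree-principal-Borel-ideals}. You instead verify feasibility of $\mathbf y$ directly against the minimal primes of $\sflex(m)$, using the vertex-cover description. Your case split is sound: when $a=i_1$, the at most $i_1-1$ variables of $P$ among $x_1,\dots,x_{i_1+s-1}$ must be exactly $x_1,\dots,x_{i_1-1}$. This buys self-containment at the cost of a little bookkeeping. Your worry about $i_1+s-1>n$ is vacuous, since $i_1+s-1\le i_s\le n$.

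Your reduction for (2) via $\sflex(m)\subseteq\sfBorel(m')$, and the appeal to \cite[Theorem 4.1]{CKSV2022-On-the-Waldschmidt-constant-of-sqfree-principal-Borel-ideals} when $i_1\ge s$, coincide with the paper's.
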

 \begin{proof}
In order to prove the theorem, we first claim that $$\sflex(m)\subset \sfBorel(x_{i_1}x_{n-s+2}x_{n-s+3}\cdots x_n) .$$ Let $u=x_{j_1}x_{j_2}\cdots x_{j_s}\in \sflex(m)$. Then $m\leq_{\lex}u$. Thus there exists $p$ such that $i_p> j_p$ for some $1\leq p\leq s$, and $i_l=j_l$ for $1\leq l<p$. Consequently, the monomial $u$ can be obtained from a Borel move of $x_{i_1}x_{n-s+2}x_{n-s+3}\cdots x_n$. Thus, we have \begin{equation}\label{eq: lower bound of lex}
        \widehat{\alpha}(\sfBorel(x_{i_1}x_{n-s+2}x_{n-s+3}\cdots x_n))\leq \widehat{\alpha}(\sflex(m)). 
    \end{equation}
    Since $\sfBorel(x_{i_1}x_{i_1+1}\cdots x_{i_1+s-1})\subset \sflex(m)$, it follows that 
    \begin{equation}\label{eq: upper bound of lex}
        \widehat{\alpha}(\sflex(m))\leq \widehat{\alpha}(\sfBorel(x_{i_1}x_{i_1+1}\cdots x_{i_1+s-1})).
    \end{equation}
    
    $(1)$ Immediately follows from Equation \eqref{eq: upper bound of lex} and \cite[Corollary 4.2]{CKSV2022-On-the-Waldschmidt-constant-of-sqfree-principal-Borel-ideals}.

    $(2)$ If $i_{1}\geq s$, then the result follows from Equation \eqref{eq: lower bound of lex} and \cite[Theorem 4.1]{CKSV2022-On-the-Waldschmidt-constant-of-sqfree-principal-Borel-ideals}. Otherwise, the result follows from Equation \eqref{eq: lower bound of lex} and Theorem \ref{thm: lower-bound in general}.
 \end{proof}
 \section*{Acknowledgments}
 The authors acknowledge the use of the computer algebra system Macaulay2 \cite{M2} and the online platform SageMath \cite{sagemath} for testing their computations. 
\subsection*{Conflict of interest} The authors declare that they have no known competing financial interests or personal relationships that could have appeared to influence the work reported in this paper.
\nocite{Monomial-ideals-HH}
\bibliographystyle{plain}
	\bibliography{bib}

@article {Borel-Generators-FMJ-2011,
    AUTHOR = {Francisco, Christopher A. and Mermin, Jeffrey and Schweig,
              Jay},
     TITLE = {Borel generators},
   JOURNAL = {J. Algebra},
  FJOURNAL = {Journal of Algebra},
    VOLUME = {332},
      YEAR = {2011},
     PAGES = {522--542},
      ISSN = {0021-8693,1090-266X},
   MRCLASS = {13F20 (05E40 13D02 13D40)},
  MRNUMBER = {2774702},
MRREVIEWER = {Benjamin\ P.\ Richert},
       DOI = {10.1016/j.jalgebra.2010.09.042},
       URL = {https://doi.org/10.1016/j.jalgebra.2010.09.042},
}

@article {BCGHJNSTV2016-The-Waldschmidt-constant-for-sqfree-monomial-ideals,
    AUTHOR = {Bocci, Cristiano and Cooper, Susan and Guardo, Elena and
              Harbourne, Brian and Janssen, Mike and Nagel, Uwe and
              Seceleanu, Alexandra and Van Tuyl, Adam and Vu, Thanh},
     TITLE = {The {W}aldschmidt constant for squarefree monomial ideals},
   JOURNAL = {J. Algebraic Combin.},
  FJOURNAL = {Journal of Algebraic Combinatorics. An International Journal},
    VOLUME = {44},
      YEAR = {2016},
    NUMBER = {4},
     PAGES = {875--904},
      ISSN = {0925-9899,1572-9192},
   MRCLASS = {13F20 (13A02 13F55 14N05 14N20)},
  MRNUMBER = {3566223},
MRREVIEWER = {Christopher\ A.\ Francisco},
       DOI = {10.1007/s10801-016-0693-7},
       URL = {https://doi.org/10.1007/s10801-016-0693-7},
}

@article {CKSV2022-On-the-Waldschmidt-constant-of-sqfree-principal-Borel-ideals,
    AUTHOR = {Camps Moreno, Eduardo and Kohne, Craig and Sarmiento, Eliseo
              and Van Tuyl, Adam},
     TITLE = {On the {W}aldschmidt constant of square-free principal {B}orel
              ideals},
   JOURNAL = {Proc. Amer. Math. Soc.},
  FJOURNAL = {Proceedings of the American Mathematical Society},
    VOLUME = {150},
      YEAR = {2022},
    NUMBER = {10},
     PAGES = {4145--4157},
      ISSN = {0002-9939,1088-6826},
   MRCLASS = {13F20 (13F55)},
  MRNUMBER = {4470164},
MRREVIEWER = {Aryampilly\ V.\ Jayanthan},
       DOI = {10.1090/proc/16082},
       URL = {https://doi.org/10.1090/proc/16082},
}

@book {IdealsofPowers-and-Powersofideals-,
    AUTHOR = {Carlini, Enrico and H\`a, Huy T\`ai and Harbourne, Brian and
              Van Tuyl, Adam},
     TITLE = {Ideals of powers and powers of ideals},
    SERIES = {Lecture Notes of the Unione Matematica Italiana},
    VOLUME = {27},
      NOTE = {Intersecting algebra, geometry, and combinatorics,
              With a foreword by Alfio Ragusa},
 PUBLISHER = {Springer, Cham},
      YEAR = {[2020] \copyright 2020},
     PAGES = {xix+159},
      ISBN = {978-3-030-45246-9; 978-3-030-45247-6},
   MRCLASS = {13-02 (11P05 13F20 14-02 14N07)},
  MRNUMBER = {4233193},
MRREVIEWER = {Jorge\ Neves},
       DOI = {10.1007/978-3-030-45247-6},
       URL = {https://doi.org/10.1007/978-3-030-45247-6},
}

@incollection {Waldschmidt-introduced,
    AUTHOR = {Waldschmidt, Michel},
     TITLE = {Propri\'et\'es arithm\'etiques de fonctions de plusieurs
              variables. {II}},
 BOOKTITLE = {S\'eminaire {P}ierre {L}elong ({A}nalyse) (ann\'ee 1975/76);
              {J}ourn\'ees sur les {F}onctions {A}nalytiques ({T}oulouse,
              1976)},
    SERIES = {Lecture Notes in Math.},
    VOLUME = {Vol. 578},
     PAGES = {108--135},
 PUBLISHER = {Springer, Berlin-New York},
      YEAR = {1977},
   MRCLASS = {10F35},
  MRNUMBER = {453659},
MRREVIEWER = {P.\ Bundschuh},
}

@Misc{M2,
          author = {Grayson, Daniel R. and Stillman, Michael E.},
          title = {Macaulay2, a software system for research in algebraic geometry},
          howpublished = {Available at \url{http://www2.macaulay2.com}}
        }

@article {Comparing-powers-and-symbolic-powers-of-ideals-B_H,
    AUTHOR = {Bocci, Cristiano and Harbourne, Brian},
     TITLE = {Comparing powers and symbolic powers of ideals},
   JOURNAL = {J. Algebraic Geom.},
  FJOURNAL = {Journal of Algebraic Geometry},
    VOLUME = {19},
      YEAR = {2010},
    NUMBER = {3},
     PAGES = {399--417},
      ISSN = {1056-3911,1534-7486},
   MRCLASS = {13F20 (13A15)},
  MRNUMBER = {2629595},
MRREVIEWER = {Irena\ Swanson},
       DOI = {10.1090/S1056-3911-09-00530-X},
       URL = {https://doi.org/10.1090/S1056-3911-09-00530-X},
}

@article {Steiner-systems-and-configrations-of-points-BFGM,
    AUTHOR = {Ballico, Edoardo and Favacchio, Giuseppe and Guardo, Elena and
              Milazzo, Lorenzo},
     TITLE = {Steiner systems and configurations of points},
   JOURNAL = {Des. Codes Cryptogr.},
  FJOURNAL = {Designs, Codes and Cryptography. An International Journal},
    VOLUME = {89},
      YEAR = {2021},
    NUMBER = {2},
     PAGES = {199--219},
      ISSN = {0925-1022,1573-7586},
   MRCLASS = {51E10 (13F20 13F55 14G50 94B27)},
  MRNUMBER = {4212956},
MRREVIEWER = {Svetlana\ Todorova\ Topalova},
       DOI = {10.1007/s10623-020-00815-x},
       URL = {https://doi.org/10.1007/s10623-020-00815-x},
}

@article {Chudnovsky-conjecture-and-the-stable-Harbourne-Huneke-containment-BGHN,
    AUTHOR = {Bisui, Sankhaneel and Grifo, Elo\'isa and H\`a, Huy T\`ai and
              Nguyen, Thai Thanh},
     TITLE = {Chudnovsky's conjecture and the stable {H}arbourne-{H}uneke
              containment},
   JOURNAL = {Trans. Amer. Math. Soc. Ser. B},
  FJOURNAL = {Transactions of the American Mathematical Society. Series B},
    VOLUME = {9},
      YEAR = {2022},
     PAGES = {371--394},
      ISSN = {2330-0000},
   MRCLASS = {14N20 (13F20 14C20)},
  MRNUMBER = {4427103},
MRREVIEWER = {Fei\ Ye},
       DOI = {10.1090/btran/103},
       URL = {https://doi.org/10.1090/btran/103},
}

@article {Powers-of-principal-Q-Borel-ideals-CKCSV,
    AUTHOR = {Camps-Moreno, Eduardo and Kohne, Craig and Sarmiento, Eliseo
              and Van Tuyl, Adam},
     TITLE = {Powers of principal {$Q$}-{B}orel ideals},
   JOURNAL = {Canad. Math. Bull.},
  FJOURNAL = {Canadian Mathematical Bulletin. Bulletin Canadien de
              Math\'ematiques},
    VOLUME = {65},
      YEAR = {2022},
    NUMBER = {3},
     PAGES = {633--652},
      ISSN = {0008-4395,1496-4287},
   MRCLASS = {13F55 (05E40)},
  MRNUMBER = {4472492},
MRREVIEWER = {Satoshi\ Murai},
       DOI = {10.4153/S0008439521000606},
       URL = {https://doi.org/10.4153/S0008439521000606},
}

@book {Monomial-ideals-HH,
    AUTHOR = {Herzog, J\"urgen and Hibi, Takayuki},
     TITLE = {Monomial ideals},
    SERIES = {Graduate Texts in Mathematics},
    VOLUME = {260},
 PUBLISHER = {Springer-Verlag London, Ltd., London},
      YEAR = {2011},
     PAGES = {xvi+305},
      ISBN = {978-0-85729-105-9},
   MRCLASS = {13D02 (05E40 13D40 13F55 13P10)},
  MRNUMBER = {2724673},
MRREVIEWER = {Rahim\ Zaare-Nahandi},
       DOI = {10.1007/978-0-85729-106-6},
       URL = {https://doi.org/10.1007/978-0-85729-106-6},
}

@article {The-Waldschmidt-Constant-of-Special-K-configrations-in-$P^n$-CGS,
    AUTHOR = {Catalisano, Maria Virginia and Guardo, Elena and Shin,
              Yong-Su},
     TITLE = {The {W}aldschmidt constant of special {$\Bbbk$}-configurations
              in {$\mathbb P^n$}},
   JOURNAL = {J. Pure Appl. Algebra},
  FJOURNAL = {Journal of Pure and Applied Algebra},
    VOLUME = {224},
      YEAR = {2020},
    NUMBER = {10},
     PAGES = {106341, 28},
      ISSN = {0022-4049,1873-1376},
   MRCLASS = {14M05 (13F20)},
  MRNUMBER = {4093063},
MRREVIEWER = {Jorge\ Neves},
       DOI = {10.1016/j.jpaa.2020.106341},
       URL = {https://doi.org/10.1016/j.jpaa.2020.106341},
}

@incollection {Singular-points-on-complex-hypersurfaces-and-multidimensional-Schwartz-lemma-Chudnovsky,
    AUTHOR = {Chudnovsky, G. V.},
     TITLE = {Singular points on complex hypersurfaces and multidimensional
              {S}chwarz lemma},
 BOOKTITLE = {Seminar on {N}umber {T}heory, {P}aris 1979--80},
    SERIES = {Progr. Math.},
    VOLUME = {12},
     PAGES = {29--69},
 PUBLISHER = {Birkh\"auser, Boston, MA},
      YEAR = {1981},
      ISBN = {3-7643-3035-X},
   MRCLASS = {32A22 (10F35 14J17)},
  MRNUMBER = {633888},
MRREVIEWER = {Michel\ Waldschmidt},
}

@book {Numbers-transcendants-..-Waldschmidt,
    AUTHOR = {Waldschmidt, Michel},
     TITLE = {Nombres transcendants et groupes alg\'ebriques},
    SERIES = {Ast\'erisque},
    VOLUME = {69-70},
      NOTE = {With appendices by Daniel Bertrand and Jean-Pierre Serre,
              With an English summary},
 PUBLISHER = {Soci\'et\'e{} Math\'ematique de France, Paris},
      YEAR = {1979},
     PAGES = {218},
   MRCLASS = {10F35 (14G25 14L15)},
  MRNUMBER = {570648},
MRREVIEWER = {Pierre\ Lelong},
}

@incollection {Estimation-L2-pour-..-Skoda,
    AUTHOR = {Skoda, H.},
     TITLE = {Estimations {$L\sp{2}$} pour l'op\'erateur {$\overline
              \partial $} et applications arithm\'etiques},
 BOOKTITLE = {S\'eminaire {P}ierre {L}elong ({A}nalyse) (ann\'ee 1975/76);
              {J}ourn\'ees sur les {F}onctions {A}nalytiques ({T}oulouse,
              1976)},
    SERIES = {Lecture Notes in Math.},
    VOLUME = {Vol. 578},
     PAGES = {314--323},
 PUBLISHER = {Springer, Berlin-New York},
      YEAR = {1977},
   MRCLASS = {32F05 (10F35)},
  MRNUMBER = {460723},
MRREVIEWER = {Michel\ Waldschmidt},
}

@article {The-resurgence-of-ideals-of-points-and-the-containment-problem-Bocci-Harbourne,
    AUTHOR = {Bocci, Cristiano and Harbourne, Brian},
     TITLE = {The resurgence of ideals of points and the containment
              problem},
   JOURNAL = {Proc. Amer. Math. Soc.},
  FJOURNAL = {Proceedings of the American Mathematical Society},
    VOLUME = {138},
      YEAR = {2010},
    NUMBER = {4},
     PAGES = {1175--1190},
      ISSN = {0002-9939,1088-6826},
   MRCLASS = {14C20 (14N05)},
  MRNUMBER = {2578512},
       DOI = {10.1090/S0002-9939-09-10108-9},
       URL = {https://doi.org/10.1090/S0002-9939-09-10108-9},
}

@article {A-canonical-linear-system-associated-to-adjoint-divisors-in-characteristic-p>0-Schwede,
    AUTHOR = {Schwede, Karl},
     TITLE = {A canonical linear system associated to adjoint divisors in
              characteristic {$p>0$}},
   JOURNAL = {J. Reine Angew. Math.},
  FJOURNAL = {Journal f\"ur die Reine und Angewandte Mathematik. [Crelle's
              Journal]},
    VOLUME = {696},
      YEAR = {2014},
     PAGES = {69--87},
      ISSN = {0075-4102,1435-5345},
   MRCLASS = {14C20 (13A35 14F18 14G17)},
  MRNUMBER = {3276163},
MRREVIEWER = {Yukihide\ Takayama},
       DOI = {10.1515/crelle-2012-0087},
       URL = {https://doi.org/10.1515/crelle-2012-0087},
}

@article {Are-symbolic-powers-highly-evolved-Harbourne-Huneke,
    AUTHOR = {Harbourne, Brian and Huneke, Craig},
     TITLE = {Are symbolic powers highly evolved?},
   JOURNAL = {J. Ramanujan Math. Soc.},
  FJOURNAL = {Journal of the Ramanujan Mathematical Society},
    VOLUME = {28A},
      YEAR = {2013},
     PAGES = {247--266},
      ISSN = {0970-1249,2320-3110},
   MRCLASS = {13F20 (13A02 13C05 13C10 14C20 14N05)},
  MRNUMBER = {3115195},
MRREVIEWER = {N.\ Mohan Kumar},
}

@article {Symbolic-powers-of-monomial-ideals-Cooper-Embree-Robert-Ha-Hoefel,
    AUTHOR = {Cooper, Susan M. and Embree, Robert J. D. and H\`a, Huy T\`ai
              and Hoefel, Andrew H.},
     TITLE = {Symbolic powers of monomial ideals},
   JOURNAL = {Proc. Edinb. Math. Soc. (2)},
  FJOURNAL = {Proceedings of the Edinburgh Mathematical Society. Series II},
    VOLUME = {60},
      YEAR = {2017},
    NUMBER = {1},
     PAGES = {39--55},
      ISSN = {0013-0915,1464-3839},
   MRCLASS = {13F20 (13A02 14N05)},
  MRNUMBER = {3589840},
MRREVIEWER = {Mike\ Janssen},
       DOI = {10.1017/S0013091516000110},
       URL = {https://doi.org/10.1017/S0013091516000110},
}

@article {Waldschmidt-constants-for-Stanley-Reisner-ideals-of-a-class-of-simplicial-complexes,
    AUTHOR = {Bocci, Cristiano and Franci, Barbara},
     TITLE = {Waldschmidt constants for {S}tanley-{R}eisner ideals of a
              class of simplicial complexes},
   JOURNAL = {J. Algebra Appl.},
  FJOURNAL = {Journal of Algebra and its Applications},
    VOLUME = {15},
      YEAR = {2016},
    NUMBER = {7},
     PAGES = {1650137, 13},
      ISSN = {0219-4988,1793-6829},
   MRCLASS = {13F55 (13F20 52B10)},
  MRNUMBER = {3528565},
MRREVIEWER = {Christopher\ A.\ Francisco},
       DOI = {10.1142/S0219498816501371},
       URL = {https://doi.org/10.1142/S0219498816501371},
}

@manual{sagemath,
  label        = {Sag95},
  author       = {{The Sage Developers}},
  title        = {{S}ageMath, the {S}age {M}athematics {S}oftware {S}ystem, {V}ersion 10.3},
  url          = {https://www.sagemath.org},
  year         = {2024},
  note         = {DOI 10.5281/zenodo.10841614},
}
\end{document}